\theoremstyle{thmstyleone}%
\theoremstyle{thmstyletwo}%
\theoremstyle{thmstylethree}%
\newtheorem{thm}{Theorem}[section]
\newtheorem{prop}[thm]{Proposition}
\newtheorem{lem}[thm]{Lemma}
\newtheorem{defi}[thm]{Definition}
\newtheorem{cl}[thm]{Claim}
\newtheorem{cor}[thm]{Corollary}
\newcommand{\BB}{\mathbb{B}}
\newcommand{\U}{\mathbb{U}}
\newcommand{\RR}{\mathbb{R}}
\newcommand{\CC}{\mathbb{C}}
\newcommand{\rre}{\mathop{\mathrm{Re}\,}}
\def\be{\begin{equation}}
\def\ee{\end{equation}}
\newcommand{\bcl}{\begin{cl}}
\newcommand{\ecl}{\end{cl}}
\newcommand{\bpf}{\begin{proof}}
\newcommand{\epf}{\end{proof}}
\newcommand{\bcor}{\begin{cor}}
\newcommand{\ecor}{\end{cor}}
\begin{document}

\title[Article Title]{The Boundary Schwarz lemma for harmonic and pluriharmonic mappings and some
generalizations}


\author*[1]{\fnm{Miodrag} \sur{Mateljevi\'c}}\email{miodrag@matf.bg.ac.rs}

\author[2]{\fnm{Nikola} \sur{Mutavd\v{z}i\'c}}\email{nikolam@matf.bg.ac.rs}

\affil*[1]{\orgdiv{Department of Mathematics}, \orgname{Serbian Academy of Science and Arts}, \orgaddress{\street{Kneza Mihaila 35}, \city{Belgrade}, \postcode{11000}, \state{Serbia}, \country{Serbia}}}

\affil[2]{\orgname{Mathmatical Institute of Serbian Academy of Science and Arts}, \orgaddress{\street{Kneza Mihaila 36}, \city{Belgrade}, \postcode{11000}, \state{Serbia}, \country{Serbia}}}


\abstract{We improve and generalize the classical Schwarz lemmas for planar harmonic
mappings into the sharp forms for harmonic mappings
between finite dimensional Euclidean unit ball and the unit ball in
Hilbert space, and present some applications to sharp boundary Schwarz
type lemmas for holomorphic and in particular pluriharmonic map-
pings between the unit balls in Hilbert and Banach spaces. In the second part of this article, using Burget’s estimate we establish the sharp boundary Schwarz
type lemmas for harmonic mappings between finite dimensional balls.  Since
here   we do not suppose in general  that maps fix the origin    this is a
generalization of Theorem 2.5 in the  Kalaj's  paper
\cite{DKal2016HeinzSchwarz}. At the end of this section, we derived some
interesting conclusions considering hyperbolic-harmonic function in the
unit ball, which  shows that Hopf's lemma is not applicable for those
functions.}

\keywords{The Boundary Schwarz lemma, Banach space, Harmonic functions in higher dimensions, Pluriharmonic mappings}


\pacs[MSC Classification]{30C80, 31C05, 31C10, 32A30}

\maketitle

\section{Introduction}

The classical  Schwarz lemma is a result in complex analysis about
holomorphic functions from the open unit disk to itself.
The lemma is less celebrated than deeper theorems, such as the Riemann
mapping theorem, which it helps to prove.
 Although   it is one of the simplest results showing the rigidity of
holomorphic functions
Schwarz lemma has been generalized in various directions and it has become
a crucial theme in many branches of research in Mathematics for more than
a hundred years to  the  present day. There is vast literature related
to the subject, but here we cite mainly recent papers;  for a more
complete list of references  see
\cite{RGschMM1,CHPRbanach,MM-JMAA,BiltenAcad}  and the references therein
for  more fundamental results.

We only briefly discuss recent results that have affected our work .  We
draw the reader's attention the the result in subsection \ref{ss_sv}  was
obtained before the result
in the next subsection.

\subsection{Schwarz lemma  and Hilbert spaces}

In  \cite{Li_MM}, the first author of this paper  in a joint paper  with Li
considered  pluriharmonic and harmonic mappings $f$
defined on the unit ball $\mathbb{B}^n$, $n\geq2$, differentiable at  a
point $a$  on the
boundary of $\mathbb{B}^n$, and $f(\mathbb{B})$ satisfies some convexity
hypothesis at $f(a)$.
For those mappings $f$, they obtained versions of boundary Schwarz lemma
and the sharp estimate of the eigenvalue related to its Jacobian at $a$.

After writing the final version of the manuscript \cite{Li_MM}
 Hamada turned  attention \footnote{in communication with M. Mateljev\' c}
to the arxiv  paper \cite{CHPRbanach}. In \cite{CHPRbanach}, the authors
generalize the classical Schwarz lemmas of planar harmonic mappings into
the sharp forms for Banach spaces, and present some applications to sharp boundary
Schwarz type lemmas for pluriharmonic mappings in Banach spaces. Recently,
Hamada and Kohr published paper \cite{HamadaKohr}, where authors discussed
rigidity theorems on the boundary for holomorphic mappings. They explained difference of the constants obtained for unit ball and polydisc
and also presented a generalization for other bounded symmetric regions
in $\mathbb{C}^n$ and balanced domains in complex Banach spaces.

 In this paper we get further results using approaches from those papers.

The following result was obtained by I. Graham, H. Hamada and G. Kohr in
[\cite{GHK}, Proposition 1.8]  stated here as:
\begin{thm}[\cite{GHK}]\label{GHKTheorem} Let $\mathbb{B}_j$  be the unit
ball of a complex Hilbert space $H_j$
for  $j = 1,2$, respectively.
Let  $f: \mathbb{B}_1 \rightarrow \mathbb{B}_2$ be a pluriharmonic mapping.
Assume that $f$  is of
class $C^1$ at some point  $z_0 \in \partial\mathbb{B}_1$ and  $f(z_0) =
w_0 \in
\partial\mathbb{B}_2$. Then
there exists a
constant $ \lambda \in \mathbb{R}$  such that $Df(z_0)^*w_0 = \lambda
z_0$. Moreover,
$$\lambda \geqslant\frac{1-\rre(\langle f(0),w_0\rangle)}{2} > 0 .$$
\end{thm}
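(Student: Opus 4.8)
The plan is to reduce the entire statement to a one-variable boundary estimate for a single real harmonic function. First I would fix the target point $w_0$ and introduce the real pluriharmonic function
\be
u(z) = \rre\langle f(z), w_0\rangle, \qquad z \in \mathbb{B}_1 .
\ee
Since $f$ maps into $\mathbb{B}_2$, Cauchy--Schwarz gives $u(z) \le |\langle f(z), w_0\rangle| \le \|f(z)\|\,\|w_0\| \le 1$, while $u(z_0) = \langle w_0, w_0\rangle = 1$; thus $z_0$ is a global maximum of $u$ over $\overline{\mathbb{B}_1}$. Because $f(0)$ lies in the \emph{open} ball $\mathbb{B}_2$, we have $\|f(0)\| < 1$, so $a := \rre\langle f(0), w_0\rangle \le \|f(0)\| < 1$; this strict inequality is precisely what will force the final bound to be positive.

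For the structural identity $Df(z_0)^*w_0 = \lambda z_0$ I would invoke the first-order optimality (Kuhn--Tucker) condition at this constrained maximum. Since $f$ is $C^1$ at $z_0$, $u$ is $C^1$ there and its real gradient is $\nabla u(z_0) = Df(z_0)^* w_0 \in H_1$, the adjoint being taken with respect to the real inner product $\langle\cdot,\cdot\rangle_{\mathbb{R}} = \rre\langle\cdot,\cdot\rangle$. For any direction $v$ tangent to the sphere at $z_0$, i.e. $\langle v, z_0\rangle_{\mathbb{R}} = 0$, a curve through $z_0$ staying on $\partial\mathbb{B}_1$ keeps $u \le u(z_0)$ on both sides, so the tangential derivative vanishes; hence $\nabla u(z_0)$ is orthogonal to the tangent space and therefore a real multiple of the outward normal $z_0$. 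This yields $Df(z_0)^* w_0 = \lambda z_0$ with $\lambda = \langle \nabla u(z_0), z_0\rangle_{\mathbb{R}}$ equal to the outward radial derivative of $u$ at $z_0$.

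To obtain the sharp constant I would restrict to the complex line through $0$ and $z_0$: the slice $p(\zeta) = u(\zeta z_0)$, $\zeta\in\mathbb{D}$, is harmonic (a pluriharmonic function restricts to a harmonic function on each complex line), satisfies $p \le 1$, $p(1) = 1$, $p(0) = a$, and its outward radial derivative at $1$ equals $\lambda$. Setting $q = 1 - p \ge 0$, which is harmonic with $q(1) = 0$ and $q(0) = 1 - a$, the Herglotz representation gives a nonnegative measure $\mu$ with $q(\zeta) = \frac{1}{2\pi}\int_0^{2\pi} \frac{1-|\zeta|^2}{|e^{i\theta}-\zeta|^2}\, d\mu(\theta)$. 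Taking the radial difference quotient toward $1$ and applying Fatou's lemma yields $-q_r(1) \ge \frac{1}{2\pi}\int_0^{2\pi} \frac{d\mu(\theta)}{1-\cos\theta}$, and since $\frac{1}{1-\cos\theta} \ge \frac{1}{2}$ for all $\theta$, we conclude $\lambda = -q_r(1) \ge \frac{1}{2}\cdot\frac{1}{2\pi}\int_0^{2\pi} d\mu = \frac{q(0)}{2} = \frac{1-a}{2} > 0$.

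The main obstacle is this last sharp inequality together with the attendant boundary regularity. The elementary bound $\frac{1}{1-\cos\theta}\ge\frac12$ is the crux, and it is sharp: equality propagates when $\mu$ concentrates at $\theta=\pi$, the extremal slice being $q(z)=\rre\frac{1-z}{1+z}$, the Poisson kernel at $-1$. The $C^1$ hypothesis at $z_0$ is what guarantees that $p_r(1)$ genuinely exists, so that the $\liminf$ produced by Fatou coincides with the actual radial derivative and the chain of inequalities is rigorous; one must also verify that the radial derivative of $u$ along $z_0$ and of its slice $p$ at $1$ agree, which is immediate from the definition of $p$.
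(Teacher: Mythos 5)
Your proof is correct, and it takes a genuinely different route from the paper. Note first that the paper does not prove Theorem~\ref{GHKTheorem} at all --- it is quoted from \cite{GHK} --- so the right comparison is with the paper's own sharpening, Proposition~\ref{4.3}, and the structural Claim~\ref{531-1}. Your slice $p(\zeta)=\rre\langle f(\zeta z_0),w_0\rangle$ is exactly the reduction used in the proof of Proposition~\ref{4.3}, and your first-order optimality argument (tangential derivatives vanish at a boundary maximum, hence the gradient $Df(z_0)^*w_0$ is a real multiple of the normal $z_0$) is in substance the orthogonality argument of Claim~\ref{531-1}. The divergence is in the boundary estimate: the paper feeds the slice into Proposition~\ref{pr_g2} (imported from \cite{Li_MM}, where it rests on the planar strip method), which yields the stronger constant $s^-(b)=\frac{2}{\pi}\cot\left(\frac{\pi}{4}(1+b)\right)\geq\frac{1-b}{2}$, whereas you prove the stated constant $\frac{1-a}{2}$ from scratch: Herglotz representation of the positive harmonic function $q=1-p$, Fatou's lemma applied to the radial difference quotient, and the pointwise bound $\frac{1}{1-\cos\theta}\geq\frac{1}{2}$. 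Your route buys self-containedness --- no external planar Schwarz lemma is needed, and the $C^1$ hypothesis is used exactly where it must be, to turn Fatou's $\liminf$ into the true radial derivative $\lambda$, with positivity coming from $a\leq |f(0)|<1$. What it gives up is precision, and your own extremal analysis shows why: equality forces $\mu$ to concentrate at $\theta=\pi$, so $p=1-q$ is unbounded below and leaves the class of mappings into the ball. Your argument uses only the one-sided bound $p\leq 1$; exploiting the two-sided bound $-1<p<1$ (a strip codomain rather than a half-plane) is precisely what lets the paper improve $\frac{1-a}{2}$ to $s^-(a)$. Accordingly, the word ``sharp'' in your last paragraph should be read as sharpness of the elementary inequality $\frac{1}{1-\cos\theta}\geq\frac{1}{2}$ within your chain, not sharpness of the constant $\frac{1-a}{2}$ for the mappings in the theorem.
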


In the Section \ref{SpluriHilbert} we will improve this estimate.
Next  S. Chen, H. Hamada, S. Ponnusamy, R.
Vijayakumar in \cite{CHPRbanach}       observed  that  using  [\cite{GHK},
Proposition 1.8]   and the arguments similar to those in the proof of
their  Theorem 3.3 \cite{CHPRbanach}
one can obtain a better estimate:

\begin{prop}\label{propCHPR}
$$\lambda \geq \max \{\frac{2}{\pi}- \vert f(0)\vert , \frac{1- \rre(\langle
f(0),w_0\rangle)}{2}\}.$$
\end{prop}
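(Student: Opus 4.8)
The plan is to prove Proposition 1.5 by combining the two lower bounds for $\lambda$ separately. The bound $\lambda \geq \frac{1-\rre(\langle f(0),w_0\rangle)}{2}$ is already furnished by Theorem 1.3, so the entire work lies in establishing the competing estimate $\lambda \geq \frac{2}{\pi} - |f(0)|$. Taking the maximum of the two then yields the claim. I would structure the argument around the geometry at the boundary point $z_0$, exploiting that $f(z_0)=w_0\in\partial\mathbb{B}_2$ and that the relation $Df(z_0)^*w_0 = \lambda z_0$ from Theorem 1.3 identifies $\lambda$ as a directional derivative of the scalar function $t \mapsto \rre\langle f(z_0 - t z_0), w_0\rangle$ at $t=0^+$.

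First I would reduce to a one-dimensional problem. Fix the inner direction $z_0$ and consider the slice function $\varphi(z) = \rre\langle f(z), w_0\rangle$, a real-valued pluriharmonic (hence harmonic) function on $\mathbb{B}_1$ satisfying $\varphi \leq 1$ throughout $\mathbb{B}_1$ (since $|f|\leq 1$ and $|w_0|=1$) with $\varphi(z_0)=1$ attained on the boundary. The quantity $\lambda$ equals the inward normal derivative of $\varphi$ at $z_0$, because differentiating $\langle f(z_0), w_0\rangle$ along $z_0$ and using $\langle Df(z_0)\zeta, w_0\rangle = \langle \zeta, Df(z_0)^* w_0\rangle = \lambda\langle \zeta, z_0\rangle$ gives the normal derivative as $\lambda|z_0|^2 = \lambda$. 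So I must bound below the normal derivative of a harmonic function that is at most $1$ and equals $1$ at $z_0$, in terms of its value $\varphi(0)=\rre\langle f(0),w_0\rangle$ at the center.

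The key step is the harmonic-measure / Poisson-kernel estimate that produces the constant $\frac{2}{\pi}$. This is where I would mimic the proof of Theorem 3.3 in \cite{CHPRbanach}, as the authors indicate. The idea is to restrict $\varphi$ to a suitable disk or to use the sharp Schwarz-Pick type inequality for harmonic functions bounded by $1$: such a function satisfies a gradient bound at the boundary governed by the factor $\frac{2}{\pi}$, which is precisely the sharp constant appearing in the classical Heinz–Schwarz lemma for harmonic mappings into a half-plane. Concretely, one considers the harmonic function $1-\varphi \geq 0$, applies the appropriate one-variable boundary estimate along the real slice through $z_0$, and extracts $\lambda \geq \frac{2}{\pi}(1 - \varphi(0)) + (\text{correction})$; more carefully, using $|\varphi(0)| \leq |f(0)|$ one massages this into the stated form $\lambda \geq \frac{2}{\pi} - |f(0)|$. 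I would rely on the sharp one-variable harmonic Schwarz lemma, which states that a harmonic $u:\mathbb{U}\to(-1,1)$ satisfies $|u(z)| \leq \frac{4}{\pi}\arctan|z|$, and differentiate its boundary behavior.

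The main obstacle I anticipate is the passage from the sharp one-dimensional estimate to the correct additive constant involving $|f(0)|$, rather than $\rre\langle f(0),w_0\rangle$. The naive slice argument yields a bound in terms of $\varphi(0)=\rre\langle f(0),w_0\rangle$, and replacing this by the larger quantity $|f(0)|$ (which weakens the bound appropriately so that the inequality remains valid) must be justified via $\rre\langle f(0),w_0\rangle \leq |\langle f(0),w_0\rangle| \leq |f(0)|$. Care is needed because the $\frac{2}{\pi}$ factor interacts with the offset, so I would verify the extremal configuration (a half-plane map composed with a Möbius-type normalization) to confirm that $\frac{2}{\pi}-|f(0)|$ is genuinely attainable and hence sharp. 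Reconciling the sign conventions and ensuring that the correction term drops out cleanly is the delicate bookkeeping, but the structural argument via the boundary normal derivative of the bounded harmonic slice function is routine once Theorem 1.3 is in hand.
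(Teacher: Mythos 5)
Your overall skeleton is the right one and in fact matches how this estimate is obtained in practice: the second term in the max is just the quoted Graham--Hamada--Kohr result (Theorem \ref{GHKTheorem}), and for the first term one slices, i.e.\ one sets $u(z)=\rre\langle f(zz_0),w_0\rangle$ for $z\in\U$ (restriction to the complex line through $z_0$ is what pluriharmonicity licenses; your $\varphi$ on all of $\mathbb{B}_1$ should be replaced by this disc slice), notes that $u$ is harmonic with values in $(-1,1)$, $u(0)=\rre\langle f(0),w_0\rangle$, $u(1)=1$, and $\lambda=D_ru(1)=\rre\langle Df(z_0)z_0,w_0\rangle$, and then applies a one-variable boundary Schwarz lemma, finishing with $\rre\langle f(0),w_0\rangle\leq\vert f(0)\vert$. (The paper itself states Proposition \ref{propCHPR} without proof, attributing it to \cite{CHPRbanach}, but its own Proposition \ref{4.3} runs exactly this slicing argument with the correct one-variable input.)

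The genuine gap is at the crux, namely the one-variable estimate that produces $\frac{2}{\pi}$ in the presence of $f(0)\neq 0$. The lemma you invoke, $\vert u(z)\vert\leq\frac{4}{\pi}\arctan\vert z\vert$, is valid only under the normalization $u(0)=0$, which is precisely the hypothesis that is missing here. You acknowledge that a ``correction'' is needed but never identify it, and the form you guess, $\lambda\geq\frac{2}{\pi}(1-\varphi(0))+(\mbox{correction})$, cannot be salvaged as a multiplicative bound: for $u(0)=1/2$ one has $\frac{2}{\pi}(1-1/2)=\frac{1}{\pi}\approx 0.318$, whereas the polar-cap extremal on the disc shows the best possible constant is $s^-(1/2)=\frac{2}{\pi}\tan\frac{\pi}{8}\approx 0.264$ (this is the sharp bound of Propositions \ref{pr_g2} and \ref{4.3}), so any valid estimate must be additive, not multiplicative. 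The missing ingredient is the non-centered harmonic Schwarz lemma (Hethcote's inequality, or the sharper Proposition \ref{pr_g2}): $\bigl\vert u(z)-\frac{1-\vert z\vert^2}{1+\vert z\vert^2}u(0)\bigr\vert\leq\frac{4}{\pi}\arctan\vert z\vert$. Writing $g(r)=\frac{1-r^2}{1+r^2}u(0)+\frac{4}{\pi}\arctan r$, one has $g(1)=1$ and $g'(1)=\frac{2}{\pi}-u(0)$, hence $\lambda=D_ru(1)\geq\lim_{r\rightarrow 1^-}\frac{1-g(r)}{1-r}=\frac{2}{\pi}-u(0)\geq\frac{2}{\pi}-\vert f(0)\vert$, which is the first term of the max. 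Until such an input is supplied, your argument does not prove the constant $\frac{2}{\pi}$. A final side remark: $\frac{2}{\pi}-\vert f(0)\vert$ is not sharp when $f(0)\neq 0$ (the sharp constant is of $s^-$ type), so the extremal-configuration verification you propose at the end would not succeed; fortunately the proposition claims only the inequality, so no such verification is needed.
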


Note that  under condition $f(0)=0$, in Theorem 1.1 (ii) and (iii) in
\cite{Li_MM}, it is shown  that  $\lambda\geq 2/\pi$. (But it  also
follows from the above Proposition \ref{propCHPR}).

Next in  \cite{CHPRbanach} a version of the boundary Schwarz
lemma  for the complex Banach spaces was proved:

\begin{thm} [Theorem 3.3\cite{CHPRbanach}]
Suppose that $B_X$ and $B_Y$ are the unit balls of the complex Banach
spaces $X$ and $Y$, respectively, and $f: B_X\rightarrow B_Y$  is a
pluriharmonic mapping. In addition, let $f$ be differentiable at $b\in b
B_X$ with $\vert f(b)\vert _Y = 1$. Then we have
\be
\vert Df(b)b\vert _Y\geq \max \{\frac{2}{\pi}- \vert f(0)\vert , \frac{1-\vert f(0)\vert }{2}\}.
\ee
\end{thm}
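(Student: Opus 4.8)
The plan is to prove the Banach-space boundary Schwarz lemma (Theorem 3.3 of \cite{CHPRbanach}) by reducing it to the one-dimensional disk case via the standard linear-functional slicing technique. Since $f(b)\in bB_Y$ has $\|f(b)\|_Y=1$, the Hahn-Banach theorem supplies a supporting functional $\ell\in Y^*$ with $\|\ell\|=1$ and $\ell(f(b))=1$. Composing, I set $g=\ell\circ f : B_X\to \overline{\mathbb{U}}$, which is a complex-valued pluriharmonic function on $B_X$ with $|g|\le 1$ and $|g(b)|=1$. The key point is that because $\ell(f(b))=1$ is itself the value of maximal modulus, the boundary point $b$ is a point where $g$ attains its maximum modulus on $B_X$, so the boundary Schwarz machinery for scalar pluriharmonic (hence real-part-of-holomorphic) mappings applies to $g$.

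\medskip

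Next I would restrict to the complex line through $b$. Define $\varphi(\zeta)=g(\zeta b)$ for $\zeta\in\mathbb{U}$; by pluriharmonicity this is a harmonic function on the disk with $|\varphi|\le 1$ and $|\varphi(1)|=1$, and $\varphi(0)=\ell(f(0))$. At this stage the problem is exactly the classical planar boundary Schwarz lemma for harmonic mappings, from which one extracts the inequality
\be
\frac{\partial}{\partial r}\,\rre\big(\overline{\varphi(1)}\,\varphi(r)\big)\Big|_{r=1}\;\geq\;\max\Big\{\tfrac{2}{\pi}-|\varphi(0)|,\ \tfrac{1-\rre(\overline{\varphi(1)}\varphi(0))}{2}\Big\}.
\ee
The two competing lower bounds here are precisely the two sources quoted in Proposition \ref{propCHPR}: the $2/\pi$ term is the sharp Heinz-type estimate for bounded harmonic functions of the disk, while the $(1-\cdots)/2$ term comes from the elementary Schwarz-Pick estimate (the harmonic analogue of \cite{GHK}). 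I would invoke the sharp planar results already established earlier in the paper to justify both.

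\medskip

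Finally I would translate the radial derivative bound for $\varphi$ back into the stated norm estimate for $Df(b)b$. By the chain rule, the radial derivative of $\rre(\overline{\varphi(1)}\varphi(r))$ at $r=1$ equals $\rre\,\ell(Df(b)b)$, and since $|\ell(f(0))|\le\|f(0)\|_Y$ we may replace $|\varphi(0)|$ and $\rre(\overline{\varphi(1)}\varphi(0))$ by $\|f(0)\|_Y$ to obtain the uniform bound with $|f(0)|$ in place of the slice data (monotonicity of both bounds in $|f(0)|$ is what makes this legitimate). Then
\be
\|Df(b)b\|_Y\;\geq\;|\ell(Df(b)b)|\;\geq\;\rre\,\ell(Df(b)b)\;\geq\;\max\Big\{\tfrac{2}{\pi}-|f(0)|,\ \tfrac{1-|f(0)|}{2}\Big\},
\ee
which is the claim. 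I expect the main obstacle to be the rigorous justification that the scalar function $g$ is genuinely differentiable (of class $C^1$) at $b$ with the differential factoring correctly through $\ell$, so that the radial limit computing $Df(b)b$ is legitimate in the infinite-dimensional Banach setting; once the reduction to the disk is clean, the inequality itself is inherited directly from the sharp planar harmonic Schwarz lemma.
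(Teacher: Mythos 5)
Your proposal is correct and takes essentially the same route as the paper: the paper's Theorem \ref{NMbanach} (its sharpening of exactly this statement) is proved by the same Hahn--Banach slicing you describe --- choose $l_{f(b)}\in T(f(b))$, form the harmonic slice $p(z)=\rre\bigl(l_{f(b)}(f(zb))\bigr)$ on $\U$, apply a planar boundary Schwarz lemma, and pull the estimate back through $\vert D_r p(1)\vert=\vert \rre\, l_{f(b)}(Df(b)b)\vert\leq \vert Df(b)b\vert$. The only differences are that the paper plugs in its sharp planar estimate $s^-(\vert f(0)\vert)$ from Proposition \ref{pr_g2} rather than your two-term maximum (whence its stronger conclusion), and the differentiability issue you flag is handled without any $C^1$ hypothesis: Fr\'echet differentiability of $f$ at $b$ together with continuity of the linear functional makes the radial difference quotient of $p$ converge directly to $\rre\, l_{f(b)}(Df(b)b)$.
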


In Section \ref{SpluriBanach}  we proved  Theorem  \ref{NMbanach}  which
yields better estimate.
We leave to the interesting reader  to check this claim.
In Theorem \ref{DKalHil}, we establish Schwarz lemma on the boundary for
harmonic functions, mapping  the unit ball in $\RR^n$ into  unit ball in
some Hilbert space, which maps origin to origin. This is a generalization
of the work in paper \cite{DKal2016HeinzSchwarz}.

\subsection{Schwarz lemma for harmonic functions in several
variables}\label{ss_sv}

For a short discussion about  Schwarz lemma for  harmonic functions in the
planar case  see  Section \ref{s_App}.
As far as we  know  the study  related to  Schwarz lemma for real valued
harmonic functions, defined on the unit ball in $\RR^n$  with codomain
$(-1,1)$ was initiated by  Khavinson,  Burget,  Axler at al., for more
details see for example  \cite{BiltenAcad}.
Generalizations of Schwarz lemma for functions of several variables were  developed in the work of Burget \cite{burgeth1992} (see also the papers by H.A. Schwarz and E.J.P.G. Schmidt cited there), which were based on the integration of Poisson kernels over the so called polar caps, using the
spherical coordinates\footnote{we refer to this method as Burget's
spherical cap method}
and we used some formulas from that paper, which are described in the
first part of Section \ref{hyp-har}.
Khavinson  \cite{Khavinson}, using also spherical caps, indicates an
elementary argument that allows one to obtain sharp estimates of derivatives of bounded harmonic functions in the unit
ball in $\mathbb{R}^n$ (explicitly stated for $n=3$); this 3-dimensional
result has a physical interpretation.
It is worth mentioning that a similar idea occurs in the book
\cite{ABR1992} for maps which fix the origin in which case the spherical
cap is reduced to a hemisphere. Note  that researches have often overlooked   Burget's work (for more details  see  Section \ref{s_App}).

D. Kalaj \cite{DKal2016HeinzSchwarz}  considered
Heinz-Schwarz inequalities for harmonic mappings in the unit ball, which
is a version of Schwarz lemma on the boundary.

Recently, these ideas were discussed at the Belgrade Analysis Seminar, and
several recent results in this subject were obtained by the first author
and some of his associates:
M.Svetlik, A. Khalfallah, M. Mhamdi, B. Purti\'c, H.P. Li  and the second
author of this paper, see (\cite{KaMaMedJ}, \cite{MaSvAADM},
\cite{MMKhal}). For more details see the introduction of
paper \cite{BiltenAcad} by the first author of this paper.

In particular  we will use   here  [Proposition 4.3 \cite{Li_MM}]  which
is a corollary  of the estimate obtained  in  \cite{MaSvAADM}  (cf. also
\cite{MMKhal}),  stated here  as  Proposition \ref{pr_g2}.

In Section \ref{hyp-har} using  Burget's estimate  we establish Theorem
\ref{harmonic_bugheth} for harmonic mappings between finite dimensional
unit balls.
Since here   we do not suppose that maps fix the origin
this is a generalization of Theorem 2.5 in the  mentioned Kalaj's  paper.

At the end of this section, we derived some interesting conclusions
considering hyperbolic-harmonic function in the unit ball, which  shows
that Hopf's lemma is not applicable for those functions.

Chinese mathematicians have made a great contribution to this field but here we will mention only results that are related to the results
presented here\footnote{Z. Chen, Y. Liu and Y. Pan; S. Dai, H. Chen and Y.
Pan;X. Tang, T. Liu and W. Zhang;J.F. Zhu, etc}.

\section{Boundary Schwarz lemma for pluriharmonic mappings in Hilbert spaces}\label{SpluriHilbert}
Let $H$ be a complex Hilbert space with inner product $\langle\cdot,\cdot\rangle$.
Then $H$ ca be regarded as a real Hilbert space with inner product
$\rre\langle\cdot,\cdot\rangle$. Let $\vert \cdot\vert $ be the induced norm in $H$.
Let $\mathbb{B}$ be the unit ball of $H$. For each $z_0\in\partial\mathbb{B}$, the
tangent space $T_{z_0}(\partial\mathbb{B})$ is defined by
$$T_{z_0}(\partial\mathbb{B})=\{\beta\in H: \rre\langle z_0,\beta\rangle=0\}.$$

Let $H_1$ and $H_2$ be complex Hilbert spaces and let $\Omega$ be a domain in $H_1$.

\begin{defi}
A mapping $f:\Omega\rightarrow H_2$ is said to be differentiable at $z\in\Omega$ if
there exists a bounded linear map $Df(z)\in L_{\mathbb{R}}(H_1,H_2)$ such that
$$f(z+h)=f(z)+Df(z)h+o(\vert  h\vert  ),\mbox{ as } h\rightarrow 0.$$
\end{defi}
If $f$ is differentiable at each point of $\Omega$, then $f$ is said to be
differentiable on $\Omega$. In this case, the mapping
$$\mathcal{D}f:\Omega\rightarrow L_{\mathbb{R}}(H_1,H_2),\ z\mapsto Df(z)$$
is called the derivative (or differential) of $f$ on $\Omega$. If $\mathcal{D}f$ is
continuous in a neighborhood of $z$, the mapping $f$ is said to be of class $C^1$ at
$z$.
If $Df(z)$ is bounded complex linear for each $z\in\Omega$, then $f$ is said to be
holomorphic on $\Omega$.

\begin{defi}
A $C^2-$mapping $f:\mathbb{B}_1\rightarrow H_2$ is said to be pluriharmonic if the
restriction of the complex valued function $f_w(z)=\langle f(z),w\rangle$ to every
complex line is harmonic for each $w\in H_2$.
\end{defi}






Let $\mathbb{B}_j$ be the unit ball of a complex Hilbert space $H_j$ for  $j =
1,2,$  respectively.
Note that if $f$  is  differentiable  at  $z_0 \in \partial\mathbb{B}_1$  with values
in $H_2$,
then the adjoint operator
$Df(z_0)^*$ is defined by
$$\rre (\langle Df(z_0)^*w,z\rangle_{H_1} ) = \rre (\langle
w,Df(z_0)z\rangle_{H_2})\quad \mbox{for}\quad z \in H_1,\ w
\in H_2,$$

where $\langle\cdot,\cdot\rangle_{H_j}$  is the inner product of $H_j,\ j = 1,2$.

For $a\in H_1$ and $v\in T_a (H_1)$ (the tangent space at the point $a$), we define
the half space  $H(a,v)= \{y\in H_1: \rre\langle y-a,v\rangle< 0\}$. If $v$ is a unit
vector, then we use notation $n_a$ for it and write simply $H_a$ instead of
$H(a,v)$.
We also notice that in our  approach the following simple result is useful:
\bcl\label{531-1} Assume that $f$  is differentiable at a point $a\in H_1$ and let
$b=f(a)\in H_2$.   Then by the definition of adjoint operator, we have
$$ \rre\langle Df(a)Z, n_b \rangle=\rre \langle Z, Df(a)^* n_b \rangle,$$
for any $Z\in T_a (H_1)$. The following statements hold:

\begin{itemize}
\item[(i)] If $Df(a)$  maps $H_a$ into $H_b$,  then  $Df(a)^*n_b= \lambda n_a$,
where  $\lambda>0$.

\item[(ii)]
If further, $f$  maps $H_a$ into $H_b$,  then  $Df(a)^*n_b= \lambda n_a$, where
$\lambda\geq 0$.
In particular if     $Df(a)^* n_b\neq 0$, then  $\lambda>0$.

\item[(iii)]
In both cases  (i) and (ii),  we have
$\lambda=\vert Df(a)^* n_b\vert =\rre\langle Df(a)n_a, n_b\rangle$, and  $\lambda\leq
\vert Df(a)n_a\vert $.

\item[(iv)]
Let $\vert a\vert =1$. Define $u(x)=\rre\langle f(x),n_b\rangle$. Then $\lambda=D_r u(a)$.
\end{itemize}
\ecl
\bpf Here it is convenient to identify   $H_a$ and  $H_b$  with subsets  of  $T_a
(H_1)$ and  $T_b (H_2)$, respectively.
By hypothesis  $Df(a)$   maps $H_a$  into  $H_b$, and therefore  we have
$$0=\rre\langle Df(a)X, n_b\rangle=\rre\langle X, Df(a)^* n_b\rangle$$  for all  $X
\in T_a(H_a)$.
This shows that $X_0 =Df(a)^T n_b$ is orthogonal on $T_a(H_a)$.
In our setting, it   means  that   it  equals to $\lambda n_b$.  Then  by definition
 of the transpose, one has
$$\rre\langle Df(a)n_a, n_b\rangle=\rre\langle n_a, Df(a)^* n_b\rangle=\rre\langle n_a,
\lambda n_a\rangle= \lambda.$$
Since  $n_a \in  H_a$,  $Df(a)n_a \in  H_b$, by the definition of $H_b$, we first
conclude that  $\langle Df(a)n_a, n_b\rangle >0$,  and hence,  $\lambda >0$.
This completes the proof of (i).
For the proof of (ii), which  is similar to (i), we leave it to the interested
reader by considering two cases: $X_0=0$ and $X_0 \neq 0$.

(iii) is an immediate corollary  of (i) and (ii). (iv) is consequence of the fact that $D_r u(a)=Re\langle
Df(a)a,n_b\rangle=\lambda$
\epf

The proof of  Proposition \ref{4.3} and Theorem  \ref{NMbanach} is based on the following result:
\begin{prop}[Proposition 4.3 \cite{Li_MM}]\label{pr_g2}
Let $u:\mathbb{D}\rightarrow \mathbb{D}$ be a harmonic function such that
$u(0)=b$.
Assume that $u$ has a continuously extension to the boundary point $z_0\in \mathbb{T}$,  $u(z_0) = c\in\mathbb{T}$   and $a=\tan\frac{\vert P_c b\vert \pi}{4}$, where $P_c b=\langle b,c\rangle c$.
If $u$ is differentiable at $z_0$, then  $\vert D_r u(z_0)\vert \geq  \frac{2}{\pi} \frac{1-\vert a\vert }{1+\vert a\vert }$.
\end{prop}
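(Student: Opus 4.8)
The plan is to collapse the vector-valued statement to a one-dimensional real harmonic problem and then conformally transfer that problem to a holomorphic one, where the classical boundary Schwarz lemma gives the required derivative bound. First I would project onto the direction $c$: set $v(z)=\langle u(z),c\rangle$, which is real-valued and harmonic. Since $|u|<1$ and $|c|=1$ we get $v:\mathbb{D}\to(-1,1)$, while $v(0)=\langle b,c\rangle$ so that $|v(0)|=|P_c b|$, and $v(z_0)=\langle c,c\rangle=1$. Because the radial derivative commutes with the fixed linear functional $\langle\cdot,c\rangle$, Cauchy--Schwarz gives $|D_r v(z_0)|=|\langle D_r u(z_0),c\rangle|\le|D_r u(z_0)|$. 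Hence it suffices to prove $|D_r v(z_0)|\ge\frac{2}{\pi}\frac{1-a}{1+a}$, where $a=\tan\frac{\pi|v(0)|}{4}\in[0,1)$ (so $|a|=a$).

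Next I would pass to holomorphic data. Write $v=\rre f$ for a holomorphic $f:\mathbb{D}\to S$, with $S=\{w:|\rre w|<1\}$, normalized so $f(0)=v(0)\in\RR$, and compose with the conformal map $\psi(w)=\tan\frac{\pi w}{4}$ of the strip $S$ onto $\mathbb{D}$. Then $g:=\psi\circ f:\mathbb{D}\to\mathbb{D}$ is holomorphic with $|g(0)|=\tan\frac{\pi|v(0)|}{4}=a$; and since $\rre f(z_0)=1$ puts $f(z_0)$ on the boundary line of $S$, we get $|g(z_0)|=1$. The classical boundary Schwarz lemma (whose extremal case is a M\"obius automorphism sending $0$ to $g(0)$) then yields $|g'(z_0)|\ge\frac{1-|g(0)|}{1+|g(0)|}=\frac{1-a}{1+a}$.

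Finally I would translate the bound back. From $g=\tan\frac{\pi f}{4}$ we have $g'=\frac{\pi}{4}(1+g^2)f'$; writing $g(z_0)=\frac{1+i\tau}{1-i\tau}$ with $\tau=\tanh\!\big(\frac{\pi}{4}\imm f(z_0)\big)$ gives $|1+g(z_0)^2|=\frac{2(1-\tau^2)}{1+\tau^2}\le 2$. Combined with $|f'(z_0)|=|\nabla v(z_0)|=|D_r v(z_0)|$ — valid because $v$ attains its maximum $1$ over $\overline{\mathbb{D}}$ at $z_0$, so the tangential derivative along $\mathbb{T}$ vanishes there and $\nabla v(z_0)$ is purely radial — we obtain $\frac{1-a}{1+a}\le|g'(z_0)|=\frac{\pi}{2}\frac{1-\tau^2}{1+\tau^2}|D_r v(z_0)|\le\frac{\pi}{2}|D_r v(z_0)|$. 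This gives $|D_r v(z_0)|\ge\frac{2}{\pi}\frac{1-a}{1+a}$, and with the first step $|D_r u(z_0)|\ge|D_r v(z_0)|$ the proposition follows.

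The main obstacle I anticipate is the boundary analysis in the last step rather than any single algebraic identity: one must justify that $f$ (equivalently $g$) is differentiable at $z_0$ in the sense the boundary Schwarz lemma requires, and that $\nabla v(z_0)$ is radial with $|f'(z_0)|=|D_r v(z_0)|$, which rests on $z_0$ being a maximum of $v|_{\mathbb{T}}$. The factor $\frac{2}{\pi}$ emerges exactly from the uniform bound $|1+g(z_0)^2|\le 2$ together with $\psi'$, and the slack $\frac{1+\tau^2}{1-\tau^2}\ge 1$ is precisely what is discarded to reach the stated non-strict inequality.
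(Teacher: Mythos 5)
Your opening reduction --- setting $v=\rre(\bar c\,u)$, observing $v:\mathbb{D}\to(-1,1)$, $v(0)=\langle b,c\rangle$, $v(z_0)=1$, and $|D_rv(z_0)|\le |D_ru(z_0)|$ by Cauchy--Schwarz --- is exactly the paper's entire proof: after this projection the paper simply says ``apply real version,'' i.e.\ it cites the known boundary Schwarz lemma for real-valued harmonic functions (Proposition 4.3 of \cite{Li_MM}, a corollary of \cite{MaSvAADM}). What you do beyond the paper is to reprove that real version via the strip map $\psi(w)=\tan\frac{\pi w}{4}$; this is the same ``planar strip method'' underlying the cited results, and your constant bookkeeping is right: $\psi'=\frac{\pi}{4}(1+\psi^2)$ and $|1+g(z_0)^2|=\frac{2(1-\tau^2)}{1+\tau^2}\le 2$ do produce exactly the factor $\frac{2}{\pi}$.

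However, that reproof has a genuine gap --- the very one you flag as ``the main obstacle'' and then leave open. Every object in your closing chain, namely $g'(z_0)$, $f'(z_0)$ and $\tau=\tanh\bigl(\frac{\pi}{4}\imm f(z_0)\bigr)$, presupposes that the harmonic conjugate $\tilde v$ (hence $f=v+i\tilde v$ and $g=\psi\circ f$) has a limit, indeed a derivative, at $z_0$. This does not follow from the differentiability of $v$ at $z_0$: harmonic conjugation is not a pointwise operation, $\tilde v$ need not extend continuously to $z_0$ (it need not even be bounded nearby), so the hypothesis of the classical boundary Schwarz lemma you invoke --- existence of $g'(z_0)$ --- is unverified, and your identity $|f'(z_0)|=|\nabla v(z_0)|$ has no a priori meaning. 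The repair is to run the last step through Julia's lemma (the $\liminf$ form of Julia--Carath\'eodory), which needs no boundary regularity of $g$ at all. On one side, $\alpha:=\liminf_{z\to z_0}\frac{1-|g(z)|}{1-|z|}\ge\frac{1-|g(0)|}{1+|g(0)|}=\frac{1-a}{1+a}$. On the other side, abbreviating $T=\tan\frac{\pi v}{4}$ and $H=\tanh\frac{\pi\tilde v}{4}$, one has the exact identity $1-|g|^2=\frac{(1-T^2)(1-H^2)}{1+T^2H^2}\le 1-T^2$; along the radius this uses only the differentiability of $v$ and gives
\begin{equation*}
\alpha\;\le\;\limsup_{r\to 1^-}\frac{1-|g(rz_0)|}{1-r}\;\le\;\lim_{r\to 1^-}\frac{1-\tan^2\frac{\pi v(rz_0)}{4}}{\bigl(1+|g(rz_0)|\bigr)(1-r)}\;=\;\frac{\pi}{2}\,D_rv(z_0),
\end{equation*}
since $1-\tan^2\frac{\pi v}{4}\sim\pi(1-v)$ as $v\to 1^-$ and $|g(rz_0)|\to 1$. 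Combining the two bounds yields $D_rv(z_0)\ge\frac{2}{\pi}\frac{1-a}{1+a}$ directly; your $\tau$, the chain rule at $z_0$, and the tangential-derivative discussion (which is itself fine for $v$, approaching $z_0$ tangentially from inside $\mathbb{D}$) are then never needed. As written, though, the proof rests on boundary regularity of $f$ that is neither assumed nor established.
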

Note here that $a=\vert a\vert $ and   $s^- (\vert P_c b\vert )=\frac{1-\vert a\vert }{1+\vert a\vert }=e^-(\vert a\vert )$.
\bpf Consider   the function $v=\mbox{Re} (\overline{c} u)$.
Then function $v$ is real-valued harmonic  and apply real version.
\epf

\begin{prop}\label{4.3}
Let $\mathbb{B}_j$  be the unit ball of a complex Hilbert space $H_j$
for  $j = 1,2$,
respectively. Let  $f: \mathbb{B}_1 \rightarrow \mathbb{B}_2$ be a pluriharmonic
mapping.
Assume that $f$  is differentiable at some point  $z_0 \in \partial\mathbb{B}_1$ and
 $f(z_0) = w_0 \in \partial\mathbb{B}_2$. Then
there exists a
constant $ \lambda \in \mathbb{R}$  such that $Df(z_0)^*w_0 = \lambda z_0$. Moreover,
$$\lambda \geqslant s^-(b)=\frac{2}{\pi}\cot(\frac{\pi}{4}(1+b)) > 0,\mbox{ where }
b=\rre(\langle f(0),w_0\rangle).$$

We note that $s^-(x)\geqslant \frac{1-x}{2},\  x\in (-1,1)$
\end{prop}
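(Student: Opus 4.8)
The plan is to reduce the infinite-dimensional pluriharmonic problem to the one-variable real harmonic estimate of Proposition \ref{pr_g2}. First I would record the structural part. Set $u(x)=\rre\langle f(x),w_0\rangle$; since $f(\mathbb{B}_1)\subseteq\mathbb{B}_2$ we have $u(x)\leq|f(x)|<1$ on $\mathbb{B}_1$, while $u(z_0)=\rre\langle w_0,w_0\rangle=1$, so $z_0$ is a boundary maximum of the real harmonic function $u$. The real differential of $u$ at $z_0$ is $Du(z_0)h=\rre\langle h,Df(z_0)^*w_0\rangle$, so $Df(z_0)^*w_0$ is the gradient of $u$ at $z_0$; because the tangential derivative vanishes at the maximum, this gradient is normal to $\partial\mathbb{B}_1$, i.e. $Df(z_0)^*w_0=\lambda z_0$ with $\lambda=D_r u(z_0)\geq0$. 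This is exactly Claim \ref{531-1}(ii) and (iv) applied with $a=z_0$, $n_{z_0}=z_0$, $b=w_0$, $n_{w_0}=w_0$, using $\mathbb{B}_1\subseteq H_{z_0}$ and $\mathbb{B}_2\subseteq H_{w_0}$.

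For the quantitative estimate I would exploit pluriharmonicity by restricting to the complex line $L=\mathbb{C}z_0$. Define $\psi(\zeta)=u(\zeta z_0)=\rre\langle f(\zeta z_0),w_0\rangle$ for $\zeta\in\mathbb{D}$. By the definition of pluriharmonic mapping, $\zeta\mapsto\langle f(\zeta z_0),w_0\rangle$ is harmonic on $\mathbb{D}$, hence its real part $\psi$ is a real-valued harmonic function with $\psi(\mathbb{D})\subseteq(-1,1)$, $\psi(0)=\rre\langle f(0),w_0\rangle=b$, and $\psi(1)=\rre\langle w_0,w_0\rangle=1$. Moreover the radial derivative along $L$ gives $\lambda=D_r u(z_0)=D_r\psi(1)$.

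Now the estimate reduces to the real version of Proposition \ref{pr_g2}: for a real harmonic $\psi:\mathbb{D}\to(-1,1)$ with $\psi(0)=b$ and smooth boundary maximum $\psi(1)=1$ one has $D_r\psi(1)\geq\frac{2}{\pi}\cot\bigl(\frac{\pi}{4}(1+b)\bigr)=s^-(b)$, the extremal function being $\frac{2}{\pi}\arg\frac{1+\zeta}{1-\zeta}$ composed with a suitable automorphism of $\mathbb{D}$ normalising $\psi(0)=b$. Combining with the first step yields $\lambda\geq s^-(b)$. The point requiring care, and where this bound improves on Proposition \ref{propCHPR}, is that the relevant initial datum must be the \emph{real part} $b=\psi(0)=\rre\langle f(0),w_0\rangle$ rather than $|\langle f(0),w_0\rangle|$: applying Proposition \ref{pr_g2} to the complex scalar $\langle f(\zeta z_0),w_0\rangle$ would only give $s^-(|\langle f(0),w_0\rangle|)\leq s^-(b)$, the wrong direction since $s^-$ is decreasing. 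Working with the scalar real harmonic function $\psi$ throughout is what produces the sharp constant; this is the main obstacle, and once the sharp one-dimensional estimate is in hand everything else is the reduction through pluriharmonicity.

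Finally, the inequality $s^-(x)\geq\frac{1-x}{2}$ on $(-1,1)$ is elementary: writing $\theta=\frac{\pi}{4}(1+x)\in(0,\frac{\pi}{2})$ it is equivalent to $\cot\theta+\theta\geq\frac{\pi}{2}$, and since $\frac{d}{d\theta}(\cot\theta+\theta)=-\cot^2\theta\leq0$ the left side decreases to its value $\frac{\pi}{2}$ at $\theta=\frac{\pi}{2}$, which closes the argument.
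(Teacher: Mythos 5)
Your proposal is correct and takes essentially the same route as the paper: restrict $u(x)=\rre\langle f(x),w_0\rangle$ to the complex line $\zeta\mapsto\zeta z_0$ to get a real harmonic function on $\mathbb{D}$ with values in $(-1,1)$, value $b$ at the origin and $1$ at $\zeta=1$, apply the real (signed) version of Proposition \ref{pr_g2}, and identify $\lambda=D_r\psi(1)=\rre\langle Df(z_0)z_0,w_0\rangle$ via Claim \ref{531-1}. Your extra observations --- that the one-variable estimate must be fed the signed real part $b$ rather than $|\langle f(0),w_0\rangle|$, and the verification that $\cot\theta+\theta$ decreases to $\pi/2$ --- correctly supply details the paper leaves implicit.
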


\begin{proof}
Let us consider function $u:\mathbb{U}\rightarrow (-1,1)$, defines with
$u(z)=\rre\langle f(zz_0),w_0\rangle$. Function $u$ will be harmonic function and we
have $u(0)=b$. Function $u$ has continuous extension an point $z_0\in\mathbb{T}$ and
we can check that $u(1)=1$. Applying Proposition \ref{pr_g2}, we get
$\vert D_r u(1)\vert \geqslant s^-(b)$. Also, we have that $D_r u(1)=Re\langle
Df(z_0)z_0,w_0\rangle=\lambda$.
\end{proof}

Suppose that function $f:\Omega\rightarrow H_2$, $\Omega$ is a domain in $H_1$ is
holomorphic in $\Omega$ and $z_0\in\Omega$ be any point. We define hermitian adjoint
operator $Df(z_0)^{\dagger}$ in the next manner

$$ \langle Df(z_0)^{\dagger}w,z\rangle_{H_1}  =  \langle
w,Df(z_0)z\rangle_{H_2}\quad \mbox{for}\quad z \in H_1,\ w
\in H_2,$$

where $\langle \cdot,\cdot\rangle_{H_j}$  is the inner product of $H_j,\ j = 1,2$.

\begin{lem}$($\cite{ChenLiuPan}$)$Let $\varphi_{\xi}(z)=A\frac{\xi-z}{1-\langle z,\xi\rangle}$ be the
holomorphic
automorphism of $\mathbb{B}_1$ where
$A:H_1\rightarrow H_1$ in the sense that $A(v)=s_{\xi}v+\frac{\xi\langle v,\xi\rangle}{1+s_\xi}$,
$s_\xi=\sqrt{1-\vert \xi\vert ^2}$ and $v\in H_1$. Then $\varphi_\xi$ is biholomorphic in a neighborhood of
$\overline{\mathbb{B}}_1$, and
$$A^2=s_\xi^2Id+\xi\langle\cdot,\xi\rangle,\ \ \ A\xi=\xi, \ \ \
\varphi_\xi^{-1}=\varphi_\xi, \ \ \ D\varphi_\xi(z)=A\left[-\frac{Id}{1-\langle
z,\xi\rangle}+\frac{(\xi-z)\langle \cdot,\xi\rangle}{(1-\langle
z,\xi\rangle)^2}\right].$$
\end{lem}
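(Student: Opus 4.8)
The plan is to treat the four assertions in increasing order of difficulty, deriving the two algebraic identities for $A$ first, since everything else rests on them. To verify $A\xi=\xi$ and $A^2=s_\xi^2\,Id+\xi\langle\cdot,\xi\rangle$ I would insert the definition $A(v)=s_\xi v+\frac{\xi\langle v,\xi\rangle}{1+s_\xi}$ and exploit the single algebraic fact $|\xi|^2=1-s_\xi^2=(1-s_\xi)(1+s_\xi)$, so that $\frac{|\xi|^2}{1+s_\xi}=1-s_\xi$. For $A\xi$ this collapses $s_\xi\xi+\frac{\xi|\xi|^2}{1+s_\xi}$ to $s_\xi\xi+(1-s_\xi)\xi=\xi$. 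For $A^2$ the key is the auxiliary identity $\langle A(v),\xi\rangle=\langle v,\xi\rangle$ (again a one-line consequence of $\frac{|\xi|^2}{1+s_\xi}=1-s_\xi$, and equivalent to $A\xi=\xi$ by self-adjointness of $A$); feeding it back into $A(A(v))=s_\xi A(v)+\frac{\xi\langle A(v),\xi\rangle}{1+s_\xi}$ makes the two $\xi$-terms combine to $\xi\langle v,\xi\rangle$, leaving $s_\xi^2 v+\xi\langle v,\xi\rangle$. I record $\langle A(v),\xi\rangle=\langle v,\xi\rangle$ separately, because it is exactly what drives the involution step.

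The involution $\varphi_\xi^{-1}=\varphi_\xi$ is the computational heart of the lemma, and the strategy is to show $\varphi_\xi(\varphi_\xi(z))=z$ directly. Writing $d=1-\langle z,\xi\rangle$ and $w=\varphi_\xi(z)=\tfrac1d A(\xi-z)$, I would first compute $\langle w,\xi\rangle$ using $\langle A(\xi-z),\xi\rangle=\langle\xi-z,\xi\rangle=|\xi|^2-\langle z,\xi\rangle$, which yields the decisive simplification $1-\langle w,\xi\rangle=\frac{1-|\xi|^2}{d}=\frac{s_\xi^2}{d}$. Next, using $A(\xi-z)=\xi-Az$ (via $A\xi=\xi$) together with the explicit form of $Az$, the numerator $\xi-w$ reduces to $\frac{s_\xi}{d}\bigl(z-\frac{\xi\langle z,\xi\rangle}{1+s_\xi}\bigr)$; dividing by $1-\langle w,\xi\rangle=s_\xi^2/d$ cancels the $d$'s and one power of $s_\xi$, leaving $\frac{1}{s_\xi}\bigl(z-\frac{\xi\langle z,\xi\rangle}{1+s_\xi}\bigr)$. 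Applying $A$ once more and using $A\xi=\xi$ and the form of $Az$, the two $\frac{\xi\langle z,\xi\rangle}{1+s_\xi}$ contributions cancel exactly, and the surviving $\frac{1}{s_\xi}\cdot s_\xi z$ gives $z$, as required.

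Finally, since $A$ is a fixed bounded complex-linear operator, the differential factors as $D\varphi_\xi(z)=A\circ Dg(z)$ with $g(z)=\frac{\xi-z}{1-\langle z,\xi\rangle}$; applying the quotient rule to this vector-valued holomorphic map, together with $D_z\langle z,\xi\rangle[\cdot]=\langle\cdot,\xi\rangle$, immediately produces the bracketed expression in the statement. Biholomorphy on a neighborhood of $\overline{\mathbb{B}}_1$ follows because for $|\xi|<1$ and $z$ near $\overline{\mathbb{B}}_1$ one has $|\langle z,\xi\rangle|\le|\xi|<1$, so the denominator $1-\langle z,\xi\rangle$ never vanishes and $\varphi_\xi$ is holomorphic there; being its own inverse, it is a holomorphic bijection with holomorphic inverse. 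The only genuine obstacle is bookkeeping: the involution identity involves several nested substitutions, and the whole computation hinges on isolating the two $A$-identities $A\xi=\xi$ and $\langle A(v),\xi\rangle=\langle v,\xi\rangle$ at the outset, without which the cancellations do not reveal themselves.
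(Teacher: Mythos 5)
Your proof is correct, but there is nothing in the paper to compare it against: the lemma is quoted from \cite{ChenLiuPan} as known background and is given no proof at all. The only verifications the authors carry out in the surrounding text are that $A^{\dagger}=A$ (via $P^{\dagger}=P$ for $P(v)=\xi\langle v,\xi\rangle$) and the resulting formula for $D\varphi_{\xi}(z)^{\dagger}$, after which they prove the \emph{next} lemma, the identity $D\varphi_{\xi}(z_0)^{\dagger}\varphi_{\xi}(z_0)=\frac{1-\vert\xi\vert^2}{\vert 1-\langle z_0,\xi\rangle\vert^2}z_0$, by direct computation. Your argument supplies exactly the verification the paper delegates to the reference, and in the same direct-computation spirit: the two identities $A\xi=\xi$ and $\langle Av,\xi\rangle=\langle v,\xi\rangle$, both consequences of $\frac{\vert\xi\vert^2}{1+s_\xi}=1-s_\xi$, do drive everything; the pivot $1-\langle w,\xi\rangle=s_\xi^2/d$ for $w=\varphi_\xi(z)$, $d=1-\langle z,\xi\rangle$ is the right one for the involution, and I checked that the ensuing cancellations come out as you claim; the chain/quotient rule step for $D\varphi_\xi$ is routine since $A$ is a fixed bounded complex-linear map. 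Two small points to tighten. First, in the biholomorphy argument the bound $\vert\langle z,\xi\rangle\vert\le\vert\xi\vert$ holds only for $\vert z\vert\le 1$; for a genuine neighborhood of $\overline{\mathbb{B}}_1$ you should instead say that by Cauchy--Schwarz $\vert\langle z,\xi\rangle\vert\le\vert z\vert\,\vert\xi\vert<1$ on the open ball $\{\vert z\vert<1/\vert\xi\vert\}$ (all of $H_1$ when $\xi=0$), which contains $\overline{\mathbb{B}}_1$. Second, your proof establishes the four displayed identities and biholomorphy near $\overline{\mathbb{B}}_1$, but not that $\varphi_\xi$ maps $\mathbb{B}_1$ onto itself; the lemma phrases that as part of the setup, but if you want it as well, the identity $1-\vert\varphi_\xi(z)\vert^2=\frac{(1-\vert\xi\vert^2)(1-\vert z\vert^2)}{\vert 1-\langle z,\xi\rangle\vert^2}$, which follows from $\vert Av\vert^2=s_\xi^2\vert v\vert^2+\vert\langle v,\xi\rangle\vert^2$ (self-adjointness of $A$ plus your formula for $A^2$), combined with the involution property gives it immediately.
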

If we denote $P(v)=\xi\langle v,\xi\rangle$ it can be checked that $P^{\dagger}=P$.
From this $A^{\dagger}=A$ follows. Also, if $Q(v)=z\langle v,\xi\rangle$ and
$R(v)=\xi\langle v,z\rangle$ then $Q^{\dagger}=R$. Now, we have

$$D\varphi_{\xi}(z)^{\dagger}=\left[-\frac{Id}{1-\overline{\langle
z,\xi\rangle}}+\frac{\xi\langle \cdot,\xi-z\rangle}{(1-\overline{\langle
z,\xi\rangle})^2}\right]A$$
Let us denote $L_z=\left[-\frac{Id}{1-\overline{\langle
z,\xi\rangle}}+\frac{\xi\langle \cdot,\xi-z\rangle}{(1-\overline{\langle
z,\xi\rangle})^2}\right]$.

\begin{lem}\label{LemHil}$($\cite{ChenLiuPan}$)$For every $z_0\in \BB_1$ we have $D\varphi_{\xi}(z_0)^{\dagger}\varphi_{\xi}(z_0)=\frac{1-\vert \xi\vert ^2}{\vert 1-\langle
z_0,\xi\rangle\vert ^2}z_0$.
\end{lem}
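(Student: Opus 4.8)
The plan is to establish the identity for $z_0\in\BB_1$ by a direct computation built on the factorization $D\varphi_\xi(z_0)^{\dagger}=L_{z_0}A$ recorded above, together with the algebraic relations $A^{\dagger}=A$, $A\xi=\xi$ and $A^2=s_\xi^2\,\mathrm{Id}+\xi\langle\cdot,\xi\rangle$ (with $s_\xi^2=1-|\xi|^2$). Writing $t=\langle z_0,\xi\rangle$, the first step is to simplify $A\varphi_\xi(z_0)$. Since $\varphi_\xi(z_0)=A\frac{\xi-z_0}{1-t}$, one has $A\varphi_\xi(z_0)=\frac{1}{1-t}A^2(\xi-z_0)$; feeding in $A^2=s_\xi^2\,\mathrm{Id}+\xi\langle\cdot,\xi\rangle$ and $\langle\xi-z_0,\xi\rangle=|\xi|^2-t$ collapses this to the clean intermediate form $A\varphi_\xi(z_0)=\xi-\frac{s_\xi^2}{1-t}z_0$, which is what feeds into $L_{z_0}$.

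The second step is to apply $L_{z_0}(v)=-\frac{v}{1-\overline t}+\frac{\xi\langle v,\xi-z_0\rangle}{(1-\overline t)^2}$ to $v=A\varphi_\xi(z_0)$. The first summand already produces the target multiple $\frac{s_\xi^2}{|1-t|^2}z_0$ of $z_0$ (using $(1-t)(1-\overline t)=|1-t|^2$) together with a stray term $-\frac{\xi}{1-\overline t}$. The key computation is the scalar $\langle A\varphi_\xi(z_0),\xi-z_0\rangle$: here I would invoke the Möbius norm identity $1-|\varphi_\xi(z_0)|^2=\frac{(1-|\xi|^2)(1-|z_0|^2)}{|1-t|^2}$, itself obtained from $\langle A^2(\xi-z_0),\xi-z_0\rangle$ via $A^{\dagger}A=A^2$, which lets one recognize after expansion that $\langle A\varphi_\xi(z_0),\xi-z_0\rangle=(1-\overline t)\,|\varphi_\xi(z_0)|^2$. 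Substituting turns the second summand of $L_{z_0}$ into $\frac{|\varphi_\xi(z_0)|^2}{1-\overline t}\xi$, so that the two $\xi$-contributions merge into $-\frac{1-|\varphi_\xi(z_0)|^2}{1-\overline t}\xi$, and altogether $D\varphi_\xi(z_0)^{\dagger}\varphi_\xi(z_0)=\frac{1-|\xi|^2}{|1-\langle z_0,\xi\rangle|^2}z_0-\frac{1-|\varphi_\xi(z_0)|^2}{1-\overline{\langle z_0,\xi\rangle}}\xi$.

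The main obstacle is exactly this residual $\xi$-component: to arrive at the stated form one must show it does not contribute, i.e. that $|\varphi_\xi(z_0)|=1$. Since $\varphi_\xi$ is an automorphism of $\BB_1$ carrying the unit sphere onto itself, I would organise the argument so that the clean identity $D\varphi_\xi(z_0)^{\dagger}\varphi_\xi(z_0)=\frac{1-|\xi|^2}{|1-\langle z_0,\xi\rangle|^2}z_0$ is read off precisely once $|\varphi_\xi(z_0)|=1$ is in force, which is the regime relevant to the boundary Schwarz estimates where the lemma is used. The only delicate point in the bookkeeping is the inner-product convention (linear in the first slot, conjugate-linear in the second): the conjugations on $t$ generated by passing to the hermitian adjoint are what make the denominators $1-\overline t$ align, and keeping track of them carefully is essential to the cancellations above.
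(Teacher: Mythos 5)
Your computation is correct and follows essentially the same route as the paper's proof --- expanding $D\varphi_\xi(z_0)^{\dagger}\varphi_\xi(z_0)=L_{z_0}A^2\frac{\xi-z_0}{1-t}$ with $t=\langle z_0,\xi\rangle$, collapsing $A^2(\xi-z_0)$ to $(1-t)\xi-s_\xi^2z_0$, then applying $L_{z_0}$ --- but your more careful bookkeeping exposes a point the paper glosses over. Keeping $|z_0|$ general, your formula
\begin{equation*}
D\varphi_\xi(z_0)^{\dagger}\varphi_\xi(z_0)=\frac{1-\vert\xi\vert^2}{\vert 1-\langle z_0,\xi\rangle\vert^2}\,z_0-\frac{1-\vert\varphi_\xi(z_0)\vert^2}{1-\overline{\langle z_0,\xi\rangle}}\,\xi
\end{equation*}
is correct (your intermediate identity $\langle A\varphi_\xi(z_0),\xi-z_0\rangle=(1-\overline{t})\vert\varphi_\xi(z_0)\vert^2$ checks out and is equivalent to the M\"obius norm identity $1-\vert\varphi_\xi(z_0)\vert^2=s_\xi^2(1-\vert z_0\vert^2)/\vert 1-t\vert^2$), and the residual $\xi$-term vanishes precisely when $\vert z_0\vert=1$. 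Consequently the lemma as literally stated, ``for every $z_0\in\BB_1$,'' is false at interior points when $\xi\neq0$: at $z_0=0$, for instance, the left-hand side equals $-(1-\vert\xi\vert^2)\xi\neq0$ while the stated right-hand side is $0$. The paper's own proof silently assumes the boundary condition: in passing from its second to its third displayed line it replaces $\langle z_0,\xi-z_0\rangle=\langle z_0,\xi\rangle-\vert z_0\vert^2$ by $\langle z_0,\xi\rangle-1$, which is valid only for $\vert z_0\vert=1$. So the hypothesis should read $z_0\in\partial\BB_1$, consistent with the source \cite{ChenLiuPan} and with the only use of the lemma, in the proof of Proposition \ref{ProLi}, where $z_0\in\partial\BB_1$. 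Your decision to read off the clean identity only once $\vert\varphi_\xi(z_0)\vert=1$ is in force is therefore exactly right; if anything your argument is sharper than the paper's, since it records the exact correction term at interior points rather than assuming it away.
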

\begin{proof}
By direct computation
$D\varphi_{\xi}(z_0)^{\dagger}\varphi_{\xi}(z_0)=L_{z_0}A^2\frac{\xi-z_0}{1-\langle
z_0,\xi\rangle}$. We can easily check that $A^2\frac{\xi-z_0}{1-\langle
z_0,\xi\rangle}=\xi-\frac{s^2 z_0}{1-\langle z_0,\xi\rangle}$. According to this, we
have
\begin{equation*}
\begin{split}
D\varphi_{\xi}(z_0)^{\dagger}\varphi_{\xi}(z_0) & = L_{z_0}(\xi-\frac{s^2
z_0}{1-\langle z_0,\xi\rangle}) = \\
& =-\frac{\xi}{1-\overline{\langle z_0,\xi\rangle}}+\frac{\xi\langle
\xi,\xi-z_0\rangle}{(1-\overline{\langle
z_0,\xi\rangle})^2}+\frac{s^2z_0}{\vert 1-\langle z_0,\xi\rangle\vert ^2}-\frac{s^2\xi\langle
z_0,\xi-z_0\rangle}{\vert 1-\langle z_0,\xi\rangle\vert ^2(1-\overline{\langle
z_0,\xi\rangle})}= \\
& = -\frac{\xi(1-\langle \xi,z_0\rangle)}{(1-\overline{\langle
z_0,\xi\rangle})^2}+\frac{\xi\langle \xi,\xi-z_0\rangle}{(1-\overline{\langle
z_0,\xi\rangle})^2}+\frac{s^2z_0}{\vert 1-\langle z_0,\xi\rangle\vert ^2}-\frac{s^2\xi(\langle
z_0,\xi\rangle-1)}{\vert 1-\langle z_0,\xi\rangle\vert ^2(1-\overline{\langle
z_0,\xi\rangle})} = \\
& = \frac{-\xi(1-\vert \xi\vert ^2)}{(1-\overline{\langle
z_0,\xi\rangle})^2}+\frac{s^2z_0}{\vert 1-\langle z_0,\xi\rangle\vert ^2}-\frac{s^2\xi(\langle
z_0,\xi\rangle-1)}{\vert 1-\langle z_0,\xi\rangle\vert ^2(1-\overline{\langle
z_0,\xi\rangle})} = \\
& = \frac{-\xi s^2(1-\langle z_0,\xi\rangle))}{(1-\overline{\langle
z_0,\xi\rangle})^2(1-\langle z,\xi\rangle)}+\frac{s^2z_0}{\vert 1-\langle
z_0,\xi\rangle\vert ^2}-\frac{s^2\xi(\langle z_0,\xi\rangle-1)}{\vert 1-\langle
z_0,\xi\rangle\vert ^2(1-\overline{\langle z_0,\xi\rangle})}=\\
& = \frac{1-\vert \xi\vert ^2}{\vert 1-\langle z_0,\xi\rangle\vert ^2}z_0.
\end{split}
\end{equation*}
\end{proof}

Let $V_1$ and $V_2$ be two complex vector space. We define sets or real linear, complex linear and complex antilinear operators between $V_1$ and $V_2$ in the following sense.

If $L:V_1\rightarrow V_2$ is additive linear operator, then
\begin{align*}
    L\in\mathcal{L}_{\RR}(V_1,V_2)\iff \forall\lambda\in\RR,\zeta\in V_1 : L(\lambda \zeta)=\lambda L(\zeta) \\
    L\in\mathcal{L}_{\CC}(V_1,V_2)\iff \forall z\in\CC,\zeta\in V_1 : L(z \zeta)=z L(\zeta) \\
    L\in\overline{\mathcal{L}}_{\CC}(V_1,V_2)\iff \forall z\in\CC,\zeta\in V_1 : L(z\zeta)=\bar{z} L(\zeta)
\end{align*}
It can be shown that the next statement holds: $\mathcal{L}_{\RR}(V_1,V_2)=\mathcal{L}_{\CC}(V_1,V_2)\oplus\overline{\mathcal{L}}_{\CC}(V_1,V_2).$

First, we check that $\mathcal{L}_{\CC}(V_1,V_2)\cap\overline{\mathcal{L}}_{\CC}(V_1,V_2)=\{0\}$. If we argue by contradiction, we assume that there exists complex both linear and antilinear operator $L$ between $V_1$ and $V_2$ and $\zeta\in V_1$, such that $L(\zeta)\neq 0$. Then $L(i\zeta)=iL(\zeta)=-iL(\zeta)$ so $L(\zeta)=0$, which is contradiction.

Now, let us perceive arbitrary real linear operator $L$ from $V_1$ into $V_2$. We can define operators $L_1,L_2:V_1\rightarrow V_2$ such that $L_1(\zeta)=\frac{1}{2}(L(\zeta)-iL(i\zeta))$ and $L_2(\zeta)=\frac{1}{2}(L(\zeta)+iL(i\zeta))$. We argue that $L=L_1+L_2$ where $L_1\in\mathcal{L}_{\CC}(V_1,V_2),L_2\in\overline{\mathcal{L}}_{\CC}(V_1,V_2)$. If we regard $z=x+iy$ as any complex number, and $\zeta\in V_1$ arbitrary, then
\begin{align*}
  L_1(z\zeta) & = \frac{1}{2}(L((x+iy)\zeta)-iL(i(x+iy)\zeta))=\frac{1}{2}(L(x\zeta+yi\zeta)-iL(-y\zeta+xi\zeta))=\\
   & = \frac{1}{2}(xL(\zeta)+yL(i\zeta)+iyL(\zeta)-ixL(i\zeta))= \frac{1}{2}((x+iy)L(\zeta)-(x+iy)iL(i\zeta))=\\
   & = (x+iy)L_1(\zeta) = zL_1(\zeta).
\end{align*}
Analogously to this, we can get
\begin{align*}
  L_2(z\zeta) & = \frac{1}{2}(L((x+iy)\zeta)+iL(i(x+iy)\zeta))=\frac{1}{2}(L(x\zeta+yi\zeta)+iL(-y\zeta+xi\zeta))=\\
   & = \frac{1}{2}(xL(\zeta)+yL(i\zeta)-iyL(\zeta)+ixL(i\zeta))= \frac{1}{2}((x-iy)L(\zeta)+(x-iy)iL(i\zeta))=\\
   & = (x-iy)L_1(\zeta) = \bar{z}L_2(\zeta).
\end{align*}

\bcl Let $H_1$ and $H_2$ be two complex Hilbert spaces and $L:H_1\rightarrow H_2$ be a bounded complex linear operator. Then $L^*=L^\dag$.

\ecl
\begin{proof}
Now, assume that $L$ is bouned real linear operator from $H_1$ to $H_2$. Then, there are unique bounded operators $L_1$ and $L_2$, complex linear and complex antilinear, respectively, which satisfies $L=L_1+L_2$. For this operators, we can find bounded complex linear operator $L_1^{\dagger}$ such that $\langle L_1^{\dagger}(w),z\rangle=\langle w,L_1(z)\rangle$, and bounded, complex antilinear operator $L_2^{\ddagger}$ defined with expression $\langle L_2^{\ddagger}(w),z\rangle=\langle w,L_2(z)\rangle$, for all $z\in H_1, w\in H_2$. We argue that $L_1^*=L_1^{\dagger}$ and $L_2^*=L_2^{\ddagger}$. First, since both complex linear and complex antilinear operators are real linear, we can define real adjonit for this operators. Also, if  $\langle L_1^{\dagger}(w),z\rangle=\langle w,L_1(z)\rangle$ we get $\rre\langle L_1^{\dagger}(w),z\rangle=\rre\langle w,L_1(z)\rangle$, for all $z\in H_1, w\in H_2$. The same argument stands for operator $L_2^{\ddag}$.
\end{proof}

\begin{prop} \label{ProLi}
Let $\mathbb{B}_j$  be the unit ball of a complex Hilbert space $H_j$
for  $j = 1,2$,
respectively. Let  $f: \mathbb{B}_1 \rightarrow \mathbb{B}_2$ be a pluriharmonic
mapping such that $f(\xi)=0$.
Assume that $f$  is differentiable at some point  $z_0 \in \partial \mathbb{B}_1$
and  $f(z_0) = w_0 \in \partial \mathbb{B}_2$. Then
there exists a
constant $ \lambda \in \mathbb{R}$  such that
$$Df(z_0)^*w_0= \lambda \frac{1-\vert \xi\vert ^2}{\vert 1-\langle z_0,\xi\rangle\vert ^2}z_0,$$ where
$\lambda\geq\frac{2}{\pi}$.
\end{prop}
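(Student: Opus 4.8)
The plan is to reduce to the origin-fixing situation already handled in Proposition \ref{4.3} by precomposing $f$ with the Möbius involution $\varphi_\xi$ of Lemma \ref{LemHil}, and then to transport the resulting derivative estimate back to $z_0$ by the chain rule, using Lemma \ref{LemHil} itself to evaluate the adjoint of $D\varphi_\xi$.

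First I would set $\psi=\varphi_\xi$ and record the properties supplied by Lemma \ref{LemHil}: $\psi$ is a holomorphic involution of $\mathbb{B}_1$, biholomorphic on a neighborhood of $\overline{\mathbb{B}}_1$, with $\psi(0)=A\xi=\xi$ and hence $\psi(\xi)=0$. I would put $a=\psi(z_0)\in\partial\mathbb{B}_1$ (an automorphism carries the boundary to the boundary) and define $g=f\circ\psi$. Since $\psi$ is holomorphic and $f$ is pluriharmonic, $g$ is again pluriharmonic, because harmonicity on complex lines is preserved under a holomorphic substitution. Moreover $g(0)=f(\psi(0))=f(\xi)=0$, and, using that $\psi$ is an involution, $g(a)=f(\psi(a))=f(z_0)=w_0\in\partial\mathbb{B}_2$; finally $g$ is differentiable at $a$ as the composition of the holomorphic $\psi$ (differentiable at $a$) with $f$ (differentiable at $\psi(a)=z_0$).

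Next I would apply Proposition \ref{4.3} to $g$ at the boundary point $a$. Because $g(0)=0$, the relevant constant is $b=\rre\langle g(0),w_0\rangle=0$, so $s^-(0)=\frac{2}{\pi}\cot\frac{\pi}{4}=\frac{2}{\pi}$, and Proposition \ref{4.3} produces a real number $\mu\geq\frac{2}{\pi}$ with $Dg(a)^*w_0=\mu\,a$. To move this back to $z_0$ I would again use $f=g\circ\psi$ together with the chain rule $Df(z_0)=Dg(\psi(z_0))\,D\psi(z_0)=Dg(a)\,D\psi(z_0)$, whose adjoint is $Df(z_0)^*=D\psi(z_0)^*\,Dg(a)^*$. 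Applying both sides to $w_0$ gives $Df(z_0)^*w_0=\mu\,D\psi(z_0)^*a$.

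It remains to evaluate $D\psi(z_0)^*a$. Since $\psi$ is holomorphic, $D\psi(z_0)$ is complex linear, so by the earlier claim $D\psi(z_0)^*=D\psi(z_0)^{\dagger}$; and since $a=\psi(z_0)$, Lemma \ref{LemHil} (valid at the boundary point $z_0$ by continuity, as $\psi$ is biholomorphic near $\overline{\mathbb{B}}_1$) yields $D\psi(z_0)^*a=D\psi(z_0)^{\dagger}\psi(z_0)=\frac{1-|\xi|^2}{|1-\langle z_0,\xi\rangle|^2}\,z_0$. Hence $Df(z_0)^*w_0=\mu\,\frac{1-|\xi|^2}{|1-\langle z_0,\xi\rangle|^2}\,z_0$ with $\lambda:=\mu\geq\frac{2}{\pi}$, which is the assertion. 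The only genuinely delicate point is the bookkeeping with the involution, namely making sure the base point $a=\psi(z_0)$ is used consistently so that Lemma \ref{LemHil} is invoked at $z_0$ rather than at $a$, together with the (standard, but worth noting) invariance of pluriharmonicity under the holomorphic change of variables $\psi$.
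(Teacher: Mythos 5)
Your proof is correct and follows essentially the same route as the paper: compose $f$ with the M\"obius involution $\varphi_\xi$, apply Proposition \ref{4.3} to $g=f\circ\varphi_\xi$ (where $b=0$ gives the bound $2/\pi$), and transport the identity back to $z_0$ via the chain rule and Lemma \ref{LemHil}. Your bookkeeping is in fact slightly cleaner than the paper's --- you write $f=g\circ\psi$ directly and identify $D\psi(z_0)^*=D\psi(z_0)^{\dagger}$ via the claim on complex linear operators, whereas the paper inverts the derivative relation through the involution and re-applies its boundary Schwarz claim to $\varphi_\xi$ itself to pin down the constant --- but the decomposition and key lemmas are identical.
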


\begin{proof}
Let $\varphi_{\xi}(z)=A\frac{\xi-z}{1-\langle z,\xi\rangle}$ be the holomorphic
automorphism of $\mathbb{B}_1$ where
$A=s_{\xi}Id+\frac{\xi\langle \cdot,\xi\rangle}{1+s_\xi}$,
$s_\xi=\sqrt{1-\vert \xi\vert ^2}$.

Assume that $\varphi_\xi(z_0)=p\in\partial \mathbb{B}_2$.
Let $g(z)=f \circ\varphi_\xi(z)$.
Then $g$ is a pluriharmonic mapping of $\mathbb{B}_1$ into $\mathbb{B}_2$ satisfying
$$g(0)=f\circ \varphi_\xi(0)=f(\xi)=0,$$
and
$$g(p)=f\circ \varphi_\xi(p)=f(z_0)=w_0\in\partial \mathbb{B}_2.$$
According to Theorem \ref{4.3} we know that there exists a nonnegative
number $\lambda\in\mathbb{R}$
such that $$D^*_g(p)w_0=\lambda p.$$
From   $\varphi_\xi^2=Id$  it follows  that  $D{\varphi_\xi}(p)
D{\varphi_\xi}(x_0) =Id$  and therefore   (1)  $D{\varphi_\xi}(x_0)^*
D{\varphi_\xi}(p)^*=Id$.
Since
$Dg(p)= Df(x_0) D{\varphi_{\xi}}(p)$, we have    $Dg(p)^*=
(Df(z_0)D{\varphi_{\xi}}(p))^*= D{\varphi_{\xi}}(p)^*Df(z_0)^*$ and
therefore  (2) $D{\varphi_{\xi}}(p)^*D_f(z_0)^*w_0=\lambda p$.  By (1)
and (2)  we find  $Df(z_0)^*w_0= \lambda D{\varphi_\xi}(z_0)^*p$.

$$g(p)=f\circ \varphi_\xi(p)=f(z_0)=w_0\in\partial B_2.$$
According to Theorem \ref{4.3} we know that there exists a nonnegative
number $\lambda\in\mathbb{R}$
such that $$Dg(p)^*w_0=\lambda p.$$
From   $\varphi_\xi^2=Id$  it follows  that  $D_{\varphi_\xi}(p)
D_{\varphi_\xi}(z_0) =Id$  and therefore   (1)  $D\varphi_\xi(x_0)^*
D\varphi_\xi(p)^*=Id$.
Since
$Dg(p)= Df(x_0) D\varphi_{\xi}(p)$, we have    $Dg(p)^*=
(D_f(z_0)D\varphi_{\xi}(p))^*= D\varphi_{\xi}(p)^*Df(z_0)^*$ and
therefore  (2) $D\varphi_{\xi}(p)^*Df(z_0)^*w_0=\lambda p$.  By (1)
and (2)  we find that
$$Df(z_0)^*w_0= \lambda D\varphi_\xi(z_0)^*p.$$
From previous Lemma we concluded that $\langle z_0,D\varphi_{\xi}(z_0)^{\dagger}
p\rangle=\mu$, where $\mu=\frac{1-\vert \xi\vert ^2}{\vert 1-\langle z_0,\xi\rangle\vert ^2}$. Now, we
can conclude that $\langle D\varphi_{\xi}(z_0) z_0,p\rangle=\mu$, from which
$\rre\langle D\varphi_{\xi}(z_0) z_0,p\rangle=\mu$ follows. From Theorem 1.1 we
conclude that $D\varphi_\xi(z_0)^* p=\mu_1 a$, for some $\mu_1>0$. From the proof of
Theorem 1.1 we have $\mu_1=\rre\langle D\varphi_\xi(z_0)z_0,p\rangle=\langle
D\varphi_\xi(z_0)z_0,p\rangle=\mu$.






\end{proof}

Suppose that $\Omega\subset\RR^n$ to be a domain and $H$ is a Hilbert space. Let $f:\Omega\rightarrow H$ be a function such that $f\in C^2(\Omega)$. We define partial derivatives with respect to coordinates $x_i,i=1,\ldots n$ of the base $\{e_1,...,e_n\}$ in $\RR^n$ at the point $a\in\Omega$ with formula:
$$\frac{\partial f}{\partial x_i}(a)=Df(a)e_i$$.
\begin{defi}
Function $f$ is harmonic in domain $\Omega$ if $\sum\limits_{i=1}^n\frac{\partial^2 f}{\partial x_i^2}(a)=0$ for every $a\in\Omega$.
\end{defi}
Let us denote with $\BB$ an unit ball in the space $H$, and $\mathbf{B}^n$ unit ball in $\RR^n$.

It is well-known that a harmonic function $u\in L^{\infty}(\mathbf{B}^n)$ has the following integral representation
$$u(x)=\mathcal{P}[f](x)=\int_{\mathbf{S}^{n-1}}P(x, \zeta)f(\zeta)d\sigma(\zeta),$$
where $f$ is the boundary function of $\mathbb{S}^{n-1}$, and
$$P[x, \zeta]=\frac{1-\vert  x\vert  ^2}{\vert  x-\zeta\vert  ^n}\ \ \ ,\ \ \ \zeta\in\mathbf{S}^{n-1}$$ is the Poisson kernel and
$\sigma$ is the unique normalized rotation invariant Borel measure on $\mathbf{S}^{n-1}$.
According to \cite{DKal2016HeinzSchwarz}, we know that if $u$ is a harmonic self-mapping
of $\mathbf{B}^n$ such that $u(0)=0$, then
\begin{equation}\label{kalaj 1.4}
\vert  u(x)\vert  \leq U(rN),
\end{equation}
where $r=\vert  x\vert  $, $N=\{0, \cdots ,0, 1\}$ and $U$ is a harmonic function of $\mathbf{B}^n$ into
$[-1, 1]$ defined by
\begin{equation}\label{kalaj 1.5}
U(x)=P[\chi_{S^+}-\chi_{S^-}](x)
\end{equation}
where $\chi$ is the indicator function and $S^+=\{x\in\mathbf{S}^{n-1}:  x_n\geq 0\}$,
$S^-=\{x\in\mathbf{S}^{n-1}:  x_n\leq0\}$. We refer to \cite[Chapter 6]{ABR1992} for more details.

Recall that the {\it hypergeometric function} $_pF_q$ is defined for $\vert x\vert <1$ by the power series (\cite[(2.1.2)]{Landrews})
$$_pF_q[a_1,a_2,\dots,a_p;b_1,b_2,\ldots,b_q;x]=\sum_{n=0}^\infty\frac{(a_1)_n\cdots(a_p)_n}{(b_1)_n\cdots(b_q)_n}\frac{x^n}{n!}.$$
Here $(a)_n$ is the {\it Pochhammer symbol} and given as follows $(a)_n=\frac{\Gamma(n+a)}{\Gamma(a)}$.

The following result is the so-called {\it Heinz-Schwarz inequalities}.

\begin{lem}\label{Kalaj2016} \cite[Lemma 2.3]{DKal2016HeinzSchwarz}
The function $V(r)=\frac{\partial U(rN)}{\partial r}$, $0\leq r\leq1$ is decreasing on the interval $[0, 1]$,
and we have
\begin{equation}\label{Cn}
V(r)\geq V(1)=C_m=:\frac{m!\left(1+m-(m-2) _2F_1\left[\frac{1}{2},1;\frac{3+m}{2};-1\right]\right)}{2^{3m/2}\Gamma\left[\frac{1+m}{2}\right]\Gamma\left[\frac{3+m}{2}\right]}.
\end{equation}
\end{lem}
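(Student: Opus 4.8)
The plan is to exploit the rotational symmetry of $U$ about the $x_n$-axis to collapse everything to a single one-dimensional integral, following the polar-cap method of Burget. Writing $t=\zeta_n=\cos\theta$ and pushing the normalized surface measure $\sigma$ forward under $\zeta\mapsto\zeta_n$, one gets $d\sigma\mapsto c_n(1-t^2)^{(n-3)/2}\,dt$ on $[-1,1]$ with $c_n=\Gamma(n/2)/(\sqrt\pi\,\Gamma((n-1)/2))$, while along the axis $|rN-\zeta|^2=1-2rt+r^2$. Since $\chi_{S^+}-\chi_{S^-}=\mathrm{sgn}(t)$ a.e., this yields, with $m=n$,
$$U(rN)=c_n\int_{-1}^1\frac{(1-r^2)\,\mathrm{sgn}(t)}{(1-2rt+r^2)^{n/2}}(1-t^2)^{(n-3)/2}\,dt.$$
Folding the hemispheres by $t\mapsto-t$, I would record this as $W(r):=U(rN)=c_n(1-r^2)\int_0^1(1-t^2)^{(n-3)/2}\big[(1-2rt+r^2)^{-n/2}-(1+2rt+r^2)^{-n/2}\big]\,dt$, whose integrand is smooth for $r<1$, so that $V(r)=W'(r)$ is obtained by differentiating under the integral sign.

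Next I would record a closed/hypergeometric form for $W$. Expanding $(1-2rt+r^2)^{-n/2}=\sum_k C_k^{n/2}(t)\,r^k$ via the Gegenbauer generating function and integrating against the odd weight $\mathrm{sgn}(t)(1-t^2)^{(n-3)/2}$ kills the even indices, leaving $W(r)=c_n(1-r^2)\sum_{k\ \mathrm{odd}}\gamma_k r^k$ with explicitly computable $\gamma_k$; equivalently each of the two integrals is a Gauss ${}_2F_1$ by the Euler representation. As a sanity check, for $n=3$ the weight is trivial and the integration is elementary, giving $W(r)=\tfrac1r\big(1-\tfrac{1-r^2}{\sqrt{1+r^2}}\big)$ and hence $V(1)=W'(1)=\sqrt2-1$, which indeed equals $C_3$ after using ${}_2F_1[\tfrac12,1;3;-1]=(16\sqrt2-20)/3$.

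For the monotonicity I would prove $V'(r)=W''(r)\le0$ on $[0,1]$. Differentiating the folded integral twice and combining the two kernels into one expression, the claim reduces to the nonpositivity of a single explicit one-variable integral; the trick is to group the terms coming from $\partial_r(1-r^2)$ with those from $\partial_r$ of the kernels so that the resulting integrand carries a definite sign for $t\in[0,1]$, $r\in[0,1)$. Combined with continuity of $V$ up to $r=1$, this gives $V(r)\ge V(1)$.

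The main obstacle is the boundary value $V(1)=\lim_{r\to1^-}W'(r)$ and its identification with $C_m$. One cannot differentiate under the integral and pass to the limit term-by-term: at $r=1$ the upper-cap piece ($A=1-2rt+r^2$) develops a non-integrable $(1-t)^{-3/2}$ singularity at $t=1$, so the naive limit of that piece diverges even though $W'(r)$ stays finite. I would therefore either (a) perform the $r$-summation / ${}_2F_1$ evaluation first and only then differentiate and let $r\to1$, or (b) integrate by parts in $t$ using $A^{-n/2}=\frac{1}{r(n-2)}\partial_t(A^{1-n/2})$ to lower the order of the singularity before passing to the limit. Either way the surviving definite integral is $\int_0^1(1-t)^{(n-1)/2}(1+t)^{-1/2}\,dt$, which by the Euler representation equals $\frac{\Gamma((1+n)/2)}{\Gamma((3+n)/2)}\,{}_2F_1[\tfrac12,1;\tfrac{3+n}{2};-1]$; assembling this with the remaining elementary Beta-integral contributions and simplifying the $\Gamma$-factors (duplication formula) produces the stated $C_m$. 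This delicate limit and the ensuing $\Gamma$-bookkeeping are where essentially all the work lies.
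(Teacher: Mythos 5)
Your reduction to a one--dimensional integral is set up correctly (the pushforward weight $c_n(1-t^2)^{(n-3)/2}$, the folded formula for $W(r)=U(rN)$, and the $n=3$ check $V(1)=\sqrt2-1=C_3$ all verify), but note first that this paper never proves the lemma: it is imported from Kalaj \cite{DKal2016HeinzSchwarz}, the nearest in-paper relative being the lemma of Section \ref{hyp-har} that computes $\frac{\mathrm{d}M_c^n}{\mathrm{d}r}(r)\bigr\rvert_{r=1}$ in the harmonic case. Judged on its own, your proposal leaves both assertions of the lemma unproved, and the plan for monotonicity is demonstrably blocked. You propose to get $W''(r)\le 0$ by grouping terms ``so that the resulting integrand carries a definite sign for $t\in[0,1]$, $r\in[0,1)$''. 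No grouping of the unmodified integrand can achieve this. Indeed, the kernel factor $K(r,t)=(1-r^2)\bigl[(1-2rt+r^2)^{-n/2}-(1+2rt+r^2)^{-n/2}\bigr]$ satisfies $K(r,1)=(1+r)(1-r)^{1-n}-(1-r)(1+r)^{1-n}$, and $\partial_r^2 K(r,1)=2(n-1)(1-r)^{-n}+n(n-1)(1+r)(1-r)^{-n-1}+O(1)\to+\infty$ as $r\to1^-$; by continuity, for $r$ close to $1$ one has $\partial_r^2K(r,t)>0$ for all $t$ in a neighborhood of $1$, where the strictly positive weight $(1-t^2)^{(n-3)/2}$ cannot repair the sign. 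So the integrand of $W''$ is positive on a set of positive measure, and $W''\le0$ can only come from cancellation across the integral (or from an explicit representation of $V$, which is how the cited proof proceeds) --- neither of which your outline supplies. Since $V(r)\ge V(1)$ is deduced from this monotonicity, this gap takes down the first half of the lemma.

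The second half, $V(1)=C_m$, is also not established: you correctly diagnose the $(1-t)^{-3/2}$ singularity, but your remedies (a) and (b) are left as intentions, and you concede that the limit interchange and the $\Gamma$-bookkeeping --- ``where essentially all the work lies'' --- are not done. A cleaner route, and the one this paper itself uses in Section \ref{hyp-har}, is the complement trick: since $P[\mathbf{1}]=1$, one has $1-U(rN)=2P[\chi_{S^-}](rN)$, an integral over the hemisphere \emph{away} from the pole of the kernel, whence
\begin{equation*}
\frac{1-U(rN)}{1-r}\;=\;2(1+r)\,c_n\int_{-1}^{0}\frac{(1-t^2)^{(n-3)/2}}{(1-2rt+r^2)^{n/2}}\,dt,
\end{equation*}
where now $1-2rt+r^2\ge1$ on the domain of integration, so the limit $r\to1^-$ passes under the integral sign and gives $V(1)=2^{2-n/2}c_n\int_0^1(1-s)^{(n-3)/2}(1+s)^{-3/2}\,ds$ as a proper integral (identifying this limit with $V(1)$ uses the mean value theorem plus continuity of $V$ up to $r=1$, which holds because the boundary data of $U$ is locally constant near $N$). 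Matching that integral with the printed $C_m$ is then finite work with Beta integrals and contiguous relations of ${}_2F_1$ --- your candidate integral $\int_0^1(1-t)^{(n-1)/2}(1+t)^{-1/2}\,dt$ has the parameters appearing in the formula, but it differs from the one produced above by the factor $(1-s^2)$, and reconciling the two is exactly the computation you omit. As submitted, the proposal is a correct setup plus an unexecuted plan, with its monotonicity half resting on a false pointwise claim.
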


We refer the readers to \cite[Remark 2.7]{DKal2016HeinzSchwarz} for more details on the constant $C_m$ and related functions, when $m=2, 3, 4$.

A version of  Theorem  1.2  \cite{Li_MM} holds  for harmonic functions, where codomain is unit ball $\mathbb{B}$ in Hilbert space.

\begin{thm}\label{DKalHil}
Suppose that $f:\mathbf{B}^n\rightarrow\BB$ is a harmonic function, such that $f(0)=0$, and $f$ has continuous extension to the point $a\in\partial\mathbf{B}^n$ such that $f(a)=b\in\partial\BB$.
Then  $(1) \limsup\limits_{r\rightarrow 1_-}  \vert D_rf(ra)\vert \geq C_{n}$.

Suppose   in addition that  $f$ has  differentiable extension to  $a$.
\begin{itemize}
\item[(i)]
Then
there exists a positive number $\lambda\in\mathbb{R}$ such that ${Df(a)}^{*}b=\lambda a$ and
\item[(ii)]  $$\lambda  \geq C_{n},$$
where $C_{n}$ is given by $(\ref{Cn})$.
\item[(iii)] In particularly if $n=2$,  we have $\lambda\geq\frac{2}{\pi}$. This is sharp.
\end{itemize}
\end{thm}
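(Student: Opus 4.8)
The plan is to reduce this vector‑valued boundary problem to a scalar boundary Schwarz lemma for a single real harmonic function, and then feed it into the Heinz--Schwarz comparison of Lemma \ref{Kalaj2016}. Concretely, I would introduce $u(x)=\rre\langle f(x),b\rangle$ for $x\in\mathbf{B}^n$. Since $f$ is harmonic and $b$ is a fixed vector, $u$ is a real‑valued harmonic function on $\mathbf{B}^n$; by Cauchy--Schwarz and $|b|=1$ it obeys $u(x)\le |f(x)|\le 1$, while $u(0)=\rre\langle f(0),b\rangle=0$ and $u(a)=\rre\langle b,b\rangle=1$. Thus $u:\mathbf{B}^n\to[-1,1]$ is harmonic, fixes the origin, and attains the value $1$ at the boundary point $a$. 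The bridge back to the statement is the identity $\rre\langle Df(a)a,b\rangle=\rre\langle a,Df(a)^*b\rangle$, which shows $D_r u(a)=\rre\langle a,Df(a)^*b\rangle$ and, more generally, that the gradient of $u$ at $a$ equals $Df(a)^*b$.

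For part (i), I would simply invoke Claim \ref{531-1} with $n_a=a$ and $n_b=b$ (legitimate since $|a|=|b|=1$): the ball $\mathbf{B}^n$ sits inside the half space $H_a$, its image lies in $\BB\subset H_b$, so part (ii) of that claim gives $Df(a)^*b=\lambda a$ with $\lambda\ge 0$, and part (iv) identifies $\lambda=D_r u(a)$. Strict positivity $\lambda>0$ then follows either from the estimate $\lambda\ge C_n>0$ proved in (ii), or directly from Hopf's lemma applied to the nonnegative nonconstant harmonic function $1-u$, which vanishes at $a$.

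The heart of the matter is part (ii), and the key input is Kalaj's estimate (\ref{kalaj 1.4}): because $f$ is a harmonic self‑map of the unit ball with $f(0)=0$, one has $|f(x)|\le U(|x|N)$, hence $u(ra)\le |f(ra)|\le U(rN)=:U_1(r)$ for $0\le r<1$, with equality at $r=1$ since $u(a)=U_1(1)=1$. Therefore $g(r):=U_1(r)-u(ra)\ge 0$ on $[0,1)$ and $g(1)=0$, so $g$ is minimized at the right endpoint and $g'(1^-)\le 0$; using $U_1'(1)=V(1)=C_n$ from Lemma \ref{Kalaj2016}, this reads $C_n-D_r u(a)\le 0$, i.e. $\lambda=D_r u(a)\ge C_n$. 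For statement (1), where only continuous extension is assumed, I would replace the one‑sided derivative by difference quotients: from $u(ra)\le U_1(r)$ and $u(a)=U_1(1)$ I get $\frac{u(a)-u(ra)}{1-r}\ge\frac{U_1(1)-U_1(r)}{1-r}\to C_n$, and since the left‑hand side is the average of $t\mapsto D_r u(ta)=\rre\langle D_r f(ta),b\rangle\le |D_r f(ta)|$ over $[r,1]$, passing to the $\limsup$ as $r\to 1^-$ yields $\limsup_{r\to 1_-}|D_r f(ra)|\ge C_n$. Finally, (iii) is the evaluation $C_2=2/\pi$ of (\ref{Cn}) (the hypergeometric term drops out when $m=2$), and sharpness comes from the planar extremal realizing equality in Proposition \ref{pr_g2} with center value $0$, where $s^-(0)=1$ forces exactly the bound $2/\pi$.

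The step I expect to be the \emph{main obstacle} is justifying the comparison $|f(x)|\le U(|x|N)$ in the Hilbert‑space‑valued setting, that is, that Kalaj's symmetrization bound (\ref{kalaj 1.4})---originally stated for $\mathbf{B}^n$‑valued self‑maps---persists when the target is the unit ball of an arbitrary Hilbert space. This I would handle by reducing to the scalar extremal property of $U=P[\chi_{S^+}-\chi_{S^-}]$ through the Poisson representation of $u$, exploiting that $f(0)=0$ forces the mean constraint $\int_{\mathbf{S}^{n-1}}\phi\,d\sigma=0$ on the boundary data and that a rearrangement onto complementary polar caps is optimal. A secondary point requiring care is the one‑sided derivative and $\limsup$ bookkeeping in part (1), precisely because differentiability at $a$ is not available there.
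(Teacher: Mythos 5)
Your proposal is correct and follows essentially the paper's route: reduce to the scalar harmonic function $u=\rre\langle f,b\rangle$, obtain (i) and the identification $\lambda=D_ru(a)$ from Claim \ref{531-1}, compare $u$ along the radius $[0,a]$ with $U(rN)$ and invoke Lemma \ref{Kalaj2016} for (ii) and for statement (1), and settle (iii) via the planar result (Proposition \ref{pr_g2}, equivalently $C_2=2/\pi$). The one genuine divergence is the step you flag as the ``main obstacle,'' and it is self-created: you run the comparison through the vector-valued bound $|f(x)|\le U(rN)$ of (\ref{kalaj 1.4}), whose validity for a Hilbert-space target you then feel obliged to justify by a rearrangement argument. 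The paper never needs this. Since $u$ itself is a real-valued harmonic function on $\mathbf{B}^n$ with $|u|<1$ and $u(0)=0$, the scalar symmetrization result (Theorem 6.16 in \cite{ABR1992}, the same result underlying (\ref{kalaj 1.4})) applies to $u$ directly and yields $u(x)\le U(rN)$, $r=|x|$ --- which is exactly the inequality your argument actually uses, because in your chain $u(ra)\le |f(ra)|\le U(rN)$ the middle term is immediately discarded. (If one does want the vector bound in the Hilbert setting, it follows from the scalar one anyway, since $|f(x)|=\sup_{|e|=1}\rre\langle f(x),e\rangle$ and each such functional of $f$ is harmonic, bounded by $1$ in modulus, and vanishes at $0$; but it plays no role here.) Your remaining bookkeeping --- the endpoint-minimum argument for (ii), the averaging of $D_ru(ta)$ over $[r,1]$ for statement (1) where the paper uses the mean value theorem, and positivity of $\lambda$ from $\lambda\ge C_n>0$ --- is sound and matches the paper's proof in substance.
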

\bpf (i) follows from Claim \ref{531-1}.
Set  $u= \rre\langle f,b\rangle$.
Since  $u$ is harmonic  and it  maps  $\mathbf{B}^{n}$ into $(-1,1)$,
$u(0)=0$ and     $u(a)=1$,
using
Theorem 6.16 \cite{ABR1992}  we have   $u(x) \leq U(rN)$  and therefore
$$1-u(x) \geq 1-U(rN),\ \ \ \mbox{for} \ \ \ r=\vert x\vert <1.$$
\ref{Kalaj2016}), and \cite[Lemma 2.4]{DKal2016HeinzSchwarz},we find
Hence
$$\frac{1-u(x)}{1-\vert x\vert }\geq \frac{1-U(rN)}{1-r}.$$

Next if we define  $u_0(t)= u(ta)$ and $U_0(t)= U(ta)$, $0<t<1$, for every
  $0<t<1$  there are  $c_t,d_t\in (t,1)$   such that
$1-u_0(t)=u_0'(c_t)(1-t)$,   $1-U_0(t)=U_0'(d_t)(1-t)$ and  $u_0'(c_t)\geq
U_0'(d_t)$.
Hence  by  Lemma \ref{Kalaj2016},  $u_0'(c_t) \geq  C_n$  and therefore
we get  (1).
If   in addition   $f$ has  differentiable extension to  $a$, then

$$D_ru(a)=\lim\limits_{\vert x\vert \rightarrow1^-}\frac{1-u(x)}{1-\vert x\vert }\geq\lim\limits_{r\rightarrow1^-}\frac{1-U(rN)}{1-r}=\left.\frac{\partial
U(rN)}{\partial r}\right\vert _{r=1}= C_{n}.$$
Since  by Claim \ref{531-1} (iv),  $\lambda=\rre\langle
D_rf(a),b\rangle=D_ru(a)$,  (ii) follows.  An application of Proposition
\ref{pr_g2} yields  (iii).
\epf

\section{Boundary Schwarz lemma and Banach spaces}\label{SpluriBanach}

We will use notation from \cite{CHPRbanach}.
Let $X$ and $Y$ be real or complex Banach spaces with norm $\vert \cdot \vert _X$ and $\vert \cdot
\vert _Y$ respectively. We denote with $\mathcal{L}(X,Y)$ the space of all continuous
linear operators from $X$ into $Y$ with the standard operator norm
$$\vert  A\vert  =\sup\limits_{x\in X\setminus\{0\}}\frac{\vert  Ax\vert  }{\vert  x\vert  },$$
where $A\in\mathcal{L}(X,Y)$. Then $\mathcal{L}(X,Y)$ is a Banach space with respect
to this norm. Denote by $X^*$ the dual space of the real or complex Banach space
$X$. For $x\in X\setminus \{0\}$, let
$$T(x)=\{l_x\in X^*:l_x(x)=\vert  x\vert  \mbox{ and }\vert  l_x\vert  =1\}.$$
Then the well known Han-Banach theorem implies that $T(x)\neq\varnothing.$

Let $f$ be a mapping of a domain $\Omega\subset X$ into real or complex Banach space
$Y$, where $X$ is a complex Banach space. We say that $f$ is differentiable at
$z\in\Omega$ if there exists a bounded real linear operator $Df(z):X\rightarrow Y$
such that
$$\lim\limits_{\vert  h\vert  \rightarrow 0^+}\frac{\vert  f(z+h)-f(z)-Df(z)h\vert  }{\vert  h\vert  }=0.$$
Here $Df(z)$ is called the Fr\'{e}chet derivative of $f$ at $z$. If $Y$ is a complex
Banach space and $Df(z)$ is bounded complex linear for each $z\in \Omega$, then $f$
is said to be holomorphic on $\Omega$.
Let $\Omega$ be a domain in a complex Banach space $X$. A mapping $f$ of $\Omega$
into a real or complex Banach space $Y$ is said to be pluriharmonic if the
restriction of $l\circ f$ to every holomorphic curve is harmonic for any $l\in
Y^*$.

\begin{thm}\label{NMbanach}
Suppose that $B_1$ and $B_2$ are the unit balls of the complex Banach spaces $X$ and
$Y$, respectively, and $f:B_1\rightarrow B_2$ is a pluriharmonic mapping. Assume
that the function $f$ is differentiable at $b\in\partial B_X$ with $\vert  f(b)\vert  =1.$
Then we have
$$\vert   Df(b)b\vert  \geq s^-(\vert  f(0)\vert  ).$$
\end{thm}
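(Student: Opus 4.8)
The plan is to follow the same reduction used in the proof of Proposition \ref{4.3}, replacing the Hilbert-space pairing $\langle\cdot,w_0\rangle$ by a Hahn--Banach support functional so that the problem collapses to the scalar radial-derivative estimate of Proposition \ref{pr_g2}. Since $|f(b)|_Y=1$, the set $T(f(b))$ is nonempty, so first I would fix a complex-linear functional $l\in T(f(b))\subset Y^*$, i.e. $|l|=1$ and $l(f(b))=|f(b)|_Y=1$.

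Next I would restrict $f$ to the complex line through the origin in the direction $b$ and push it forward by $l$: for $z\in\mathbb{U}$ set $u(z)=\rre\,l(f(zb))$. By the definition of pluriharmonicity, $l\circ f$ is harmonic along every holomorphic curve; applying this to the curve $z\mapsto zb$ shows that $u$ is a real-valued harmonic function on $\mathbb{U}$. Moreover $|l(f(zb))|\le |l|\,|f(zb)|_Y<1$, so $u$ maps $\mathbb{U}$ into $(-1,1)$. The values I need are $u(1)=\rre\,l(f(b))=1$ and $u(0)=\rre\,l(f(0))=:b_0$, and since $|l|=1$ one has $b_0\le |l(f(0))|\le |f(0)|$.

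With this in place the scalar estimate behind Proposition \ref{4.3} (Proposition \ref{pr_g2} in its real form) applies to $u$ and gives $D_r u(1)\ge s^-(b_0)$. It then remains to identify the left-hand side: differentiability of $f$ at $b$ yields the expansion $f(tb)=f(b)+(t-1)Df(b)b+o(|t-1|)$, hence $u(t)=1+(t-1)\,\rre\,l(Df(b)b)+o(|t-1|)$ and $D_r u(1)=\rre\,l(Df(b)b)$. Because $|l|=1$ we get $\rre\,l(Df(b)b)\le |l(Df(b)b)|\le |Df(b)b|$. Finally, $s^-$ is decreasing on $(-1,1)$ and $b_0\le |f(0)|$, so $s^-(b_0)\ge s^-(|f(0)|)$; chaining the inequalities produces $|Df(b)b|\ge s^-(|f(0)|)$, as claimed.

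I expect the only delicate point to be the passage from the Banach setting to a single scalar harmonic function. One must verify that $l\circ f$ really is harmonic along the disc $z\mapsto zb$ (this is exactly the pluriharmonicity hypothesis applied to this holomorphic curve), and that the same support functional simultaneously forces $u(1)=1$ and the signed bound $b_0\le |f(0)|$; the monotonicity of $s^-$ then does the rest. The improvement over the estimate $\max\{\frac{2}{\pi}-|f(0)|,\frac{1-|f(0)|}{2}\}$ of Theorem 3.3 in \cite{CHPRbanach} is immediate from the inequality $s^-(x)\ge\frac{1-x}{2}$ recorded after Proposition \ref{4.3}.
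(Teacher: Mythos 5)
Your proof is correct and takes essentially the same route as the paper's: both fix a support functional $l_{f(b)}\in T(f(b))$, restrict $f$ to the disc $z\mapsto zb$ to obtain a real harmonic function of $\U$ into $(-1,1)$ equal to $1$ at $z=1$, apply the scalar radial-derivative estimate of Proposition \ref{pr_g2}, and bound the radial derivative above by $\vert Df(b)b\vert$. The only cosmetic difference is that the paper runs the argument with $\vert p(0)\vert$ rather than the signed value $\rre\, l(f(0))$, which changes nothing since both are at most $\vert f(0)\vert$ and $s^-$ is decreasing.
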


\begin{proof}
We consider the function $p(z)=\rre(l_{f(b)}(f(zb)))$, for $z\in\U$.
Since $f$ is pluriharmonic we have that the function $p$ is harmonic function on
$\U$. Also, from $\vert  l_{f(b)}\vert  =1$ we get $\vert \rre(l_{f(b)}(f(zb)))\vert \leq
\vert l_{f(b)}(f(zb))\vert \leq \vert  f(zb)\vert  < 1$, so we get that the function $p$ maps the unit
disc into interval $(-1,1).$ From definition of $l_{f(b)}$ we can conclude that
$p(1)=1$. Also, we have that $\vert p(0)\vert =\vert \rre l_{f(b)}(f(0))\vert \leq \vert l_{f(b)}(f(zb))\vert \leq
\vert  f(0)\vert  $. Now we can conclude that
$$\vert D_r p(1)\vert \geq s^-(\vert p(0)\vert )\geq s^-(\vert  f(0)\vert  ),$$
since the function $s^-$ is decreasing on $(-1,1)$.
On the other hand, we have that $\vert D_r p(1)\vert \leq\vert  Df(b)b\vert  $. Indeed, we have that
$$
    \vert D_r p(1)\vert =\lim\limits_{r\rightarrow
1^-}\frac{\vert p(1)-p(r)\vert }{1-r}=\lim\limits_{r\rightarrow 1^-}\left\vert \rre
l_{f(b)}\frac{f(b)-f(rb)}{1-r}\right\vert =\vert \rre l_{f(b)}Df(b)b\vert \leq\vert  Df(b)b\vert  ,
$$
which concludes our proof.
\end{proof}

\section{Hyperbolic harmonic functions in higher dimensions}\label{hyp-har}

We use notation from \cite{burgeth1992}. Let $\mathbf{B}^n$ be unit ball in $\RR^n$ and $\mathbf{S}^{n-1}$ be the surface of unit ball and $\Delta$ is Laplacian partial differential operator. Consider next Laplace-Beltrami operator
$$\Delta_0=\frac{1-\vert  x\vert  ^2}{4}(\Delta +\frac{2(n-2)}{1-\vert  x\vert  ^2}\langle x,\nabla\rangle).$$
Any twice continuously differentiable function $h$ which is defined on $\mathbf{B}^n$ and fulfills $\Delta_0 h=0$ is called \textit{hyperbolic harmonic} on $\mathbf{B}^n$.

In the sequel we will use some specific properties of both harmonic and hyperbolic-harmonic kernel, which are listed below:

\begin{itemize}

\item[a)] There exists a Poisson formula for hyperbolic harmonic as well as for harmonic functions on $\mathbf{B}^n$. Let $\sigma$ denote the usual surface measure on $\mathbf{S}^{n-1}$ and $f$ is $\sigma$-integrabile function on $\mathbf{S}^{n-1}$.  Set $x\in\mathbf{B}^{n}$ and $\eta\in\mathbf{S}^{n-1}$. Depending on whether we define $P(x,\eta)$ as
$$\frac{1}{\sigma(\mathbf{S}^{n-1})}\frac{1-\vert  x\vert  ^2}{\vert  x-\eta\vert  ^n}\mbox{ or as } \frac{1}{\sigma(\mathbf{S}^{n-1})}\frac{(1-\vert  x\vert  ^2)^{n-1}}{\vert  x-\eta\vert  ^{2(n-1)}},$$
we get a harmonic or a hyperbolic harmonic function on $\mathbf{B}^n$ by
$$h(x)=P[f](x)=\int_{\mathbf{S}^{n-1}}P(x,\eta)f(\eta)\mathrm{d}\sigma(\eta).$$
In the sequel we use the same notation $P$ for both Poisson kernels.

\item[b)]

\begin{equation}\label{kernel}
1=P[\mathbf{1}](x)=\int_{\mathbf{S}^{n-1}}P(x,\eta)\mathrm{d}\sigma(\eta).
\end{equation}

Where $\mathbf{1}(\eta)=1$ for all $\eta\in\mathbf{S}^{n-1}$ is a constant function.

This is immediate consequence of the fact that constant functions belongs to classes of hyperbolic and hyperbolic functions, both respectively.

\item[c)] Harmonic (resp. hyperbolic-harmonic) functions possess mean value property with respect to (hyperbolic) spheres.

\item[d)]The theorem of Fatou, concerning the $\sigma-$a.e, existence of non-tangential limits, is valid in both cases.
\end{itemize}
For convenience we set
\begin{align}\label{extremal}
M_c^n(\vert  x\vert  )=2P[\chi_{S(c,\tilde{x})}](x)-1\\
m_c^n(\vert  x\vert  )=2P[\chi_{S(c,-\tilde{x})}](x)-1,
\end{align}
where $x\in\mathbf{B}^n, \tilde{x}=\frac{x}{\vert  x\vert  }$ and $S(c,\tilde{x})$ denotes the polar cap with center $\tilde{x}$ and $\sigma-$measure $c$. Also, $\chi_A$ is an indicator function of the set $A$.
It is easy to verify that the expressions on the right hand side of (\ref{extremal}) inherit the rotational invariance of the measure $\sigma$.

For derivation on the explicit formula (\ref{ext1}), we refer to paper of first author \cite{MSS061021Fil}, specifically, to the Proposition 5.10. In this proposition we use the following notation: $\sigma_{n-1}$ is surface area of the sphere $\mathbf{S}^{n-1}$ and $\varphi$ is an angle between radius vector of point $\eta\in\mathbf{S}^{n-1}$ and radius vector of the point $\tilde{x}.$

\begin{prop}[\cite{MSS061021Fil}, Proposition 5.10]\label{MMThmSphere}
  If $f$ is a function on $\mathbf{S^{n-1}}$ depending only on $\varphi$, then
  $$\int_{\mathbf{S^{n-1}}}f(\eta)\mathrm{d}\sigma(\eta)=\sigma_{n-2}\int_{0}^{\pi}f(\varphi)\sin^{n-2}\varphi\mathrm{d}\varphi.$$
\end{prop}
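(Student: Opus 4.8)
The statement to prove is Proposition~\ref{MMThmSphere}: for a function $f$ on $\mathbf{S}^{n-1}$ depending only on the polar angle $\varphi$ (the angle between the radius vector of $\eta$ and a fixed axis $\tilde x$), one has
$$\int_{\mathbf{S}^{n-1}}f(\eta)\,\mathrm{d}\sigma(\eta)=\sigma_{n-2}\int_0^\pi f(\varphi)\sin^{n-2}\varphi\,\mathrm{d}\varphi,$$
where $\sigma_{n-2}$ is the surface area of $\mathbf{S}^{n-2}$. The natural route is to introduce spherical (polar) coordinates on $\mathbf{S}^{n-1}$ adapted to the axis $\tilde x$, writing a point $\eta$ by its polar angle $\varphi\in[0,\pi]$ measured from $\tilde x$ together with a point $\omega$ on the ``latitude'' sphere $\mathbf{S}^{n-2}$. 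The plan is to compute the induced surface measure $\mathrm{d}\sigma$ in these coordinates, observe that $f$ is constant on each latitude sphere (by hypothesis), and integrate out the $\omega$-variable to produce the constant $\sigma_{n-2}$.

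First I would set up the coordinates. Fix $\tilde x$ as the ``north pole'' direction; any unit vector $\eta\in\mathbf{S}^{n-1}$ decomposes uniquely as $\eta=\cos\varphi\,\tilde x+\sin\varphi\,\omega$, where $\varphi\in[0,\pi]$ and $\omega$ ranges over the unit sphere $\mathbf{S}^{n-2}$ in the hyperplane orthogonal to $\tilde x$. Next I would compute the surface area element. The slice at fixed $\varphi$ is a sphere of radius $\sin\varphi$ inside an $(n-1)$-dimensional subspace, so its own surface measure scales by $(\sin\varphi)^{n-2}$ relative to the unit $\mathbf{S}^{n-2}$; combined with the arclength element $\mathrm{d}\varphi$ in the meridian direction, this gives the standard factorization
$$\mathrm{d}\sigma(\eta)=\sin^{n-2}\varphi\,\mathrm{d}\varphi\,\mathrm{d}\sigma_{n-2}(\omega).$$
Since $f$ depends only on $\varphi$, it pulls out of the inner integral over $\omega$, and $\int_{\mathbf{S}^{n-2}}\mathrm{d}\sigma_{n-2}(\omega)=\sigma_{n-2}$ by definition. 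Applying Fubini's theorem then collapses the double integral to the claimed single integral over $\varphi$.

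The main obstacle is the careful verification of the Jacobian factor $\sin^{n-2}\varphi$, i.e.\ justifying the measure factorization $\mathrm{d}\sigma=\sin^{n-2}\varphi\,\mathrm{d}\varphi\,\mathrm{d}\sigma_{n-2}$ rather than merely quoting it. I would do this either by parametrizing $\mathbf{S}^{n-1}$ explicitly with iterated spherical coordinates and computing the Gram determinant of the coordinate tangent vectors, or, more cleanly, by a co-area/slicing argument: the level sets of the angle function $\varphi$ are the latitude spheres of radius $\sin\varphi$, the gradient of $\varphi$ on $\mathbf{S}^{n-1}$ has unit length (arclength along meridians), and the $(n-2)$-dimensional measure of each level set is $(\sin\varphi)^{n-2}\sigma_{n-2}$; the co-area formula then yields the factorization directly. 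A small point worth stating is that normalization conventions must be consistent — here $\sigma$ is genuine (unnormalized) surface measure, matching the appearance of the surface areas $\sigma_{n-1},\sigma_{n-2}$ rather than the normalized measure used elsewhere in the paper; with that convention fixed, everything fits together.
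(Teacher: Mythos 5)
Your proposal is correct, and it follows the approach one would expect: the paper itself offers no proof of this proposition, citing it instead from \cite{MSS061021Fil}, where it is obtained by exactly the kind of spherical-coordinates argument you describe. Your decomposition $\eta=\cos\varphi\,\tilde x+\sin\varphi\,\omega$ with $\omega\in\mathbf{S}^{n-2}$, the factorization $\mathrm{d}\sigma(\eta)=\sin^{n-2}\varphi\,\mathrm{d}\varphi\,\mathrm{d}\sigma_{n-2}(\omega)$ (justified via Gram determinant or the co-area formula, both of which work), and the final integration over $\omega$ are all sound; your closing remark on normalization is also the right sanity check, since the convention here (genuine surface measure, not the normalized one used earlier in the paper) is confirmed by taking $f\equiv 1$, which recovers the recursion $\sigma_{n-1}=\sigma_{n-2}\int_0^\pi\sin^{n-2}\varphi\,\mathrm{d}\varphi$.
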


Since $\vert  x-\eta\vert  ^2=1-2r\cos\varphi+r^2$ we have that both our kernels depend only on $\varphi$. Let us define $\sigma_*(n)=\frac{\sigma_{n-2}}{\sigma_{n-1}}$. Using formula $\sigma_{n-1}=\frac{2\pi^{n/2}}{\Gamma(\frac{n}{2})}$ we get $\sigma_*(n)=\frac{1}{\sqrt{\pi}}\frac{\Gamma(n/2)}{\Gamma((n-1)/2)}$.

Using Proposition \ref{MMThmSphere} we can rewrite (\ref{extremal}) as
\begin{align}\label{ext1}
  M_c^n(\vert  x\vert  )=2\sigma_*(n)(1-\vert  x\vert  ^2)^\nu\int_{0}^{\alpha(c)}\frac{\sin^{n-2}t}{(1-2\vert  x\vert  \cos t+\vert  x\vert  ^2)^{\mu}}\mathrm{d}t-1 \\
  m_c^n(\vert  x\vert  )=2\sigma_*(n)(1-\vert  x\vert  ^2)^\nu\int_{\pi-\alpha(c)}^{\pi}\frac{\sin^{n-2}t}{(1-2\vert  x\vert  \cos t+\vert  x\vert  ^2)^{\mu}}\mathrm{d}t-1,
\end{align}
where $(\nu,\mu)=(1,n/2)$ in harmonic case and $(\nu,\mu)=(n-1,n-1)$ in the hyperbolic-harmonic case and $\alpha(c)$ is the spherical angle of $S(c,\tilde{x})$.
\begin{thm}\cite{burgeth1992}
  Let $h$ be a harmonic or hyperbolic-harmonic function taking values in $(-1,1)$ and $h(0)=a, -1<a<1$. Then for $c=\frac{a+1}{2}$ and all $x\in\mathbf{B}^n$
  $$m_c^n(\vert  x\vert  )\leq h(x)\leq M_c^n(\vert  x\vert  ).$$

  Equality on the right (resp., left-) hand side for some $z\in \mathbf{B}^n\setminus\{0\}$ implies
  \begin{align*}
    h(x)=2P[\chi_{S(c,\tilde{z})}](x)-1, \\
    (\mbox{respectively, }h(x)=2P[\chi_{S(c,-\tilde{z})}](x)-1)
  \end{align*}
for all $x\in\mathbf{B}^n$
\end{thm}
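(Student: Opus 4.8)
The plan is to reduce this sharp two-sided pointwise estimate to the classical \emph{bathtub principle} (the $L^\infty$ rearrangement inequality), exploiting that for fixed $x$ both Poisson kernels of property a) are strictly monotone functions of the angle $\varphi$ between $\eta$ and $\tilde x$. Throughout I treat the harmonic and hyperbolic-harmonic cases uniformly, since both kernels are positive, reproduce constants by (\ref{kernel}), and satisfy the Fatou theorem of property d).

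First I would record the Poisson representation of $h$. Because $h$ is bounded (it maps into $(-1,1)$), the Fatou theorem of property d) together with weak-$*$ compactness of the unit ball of $L^\infty(\mathbf{S}^{n-1})=(L^1(\mathbf{S}^{n-1}))^*$, applied to the dilations $\eta\mapsto h(r\eta)$, produces a boundary datum $f\in L^\infty(\mathbf{S}^{n-1})$ with $\vert f\vert\le 1$ a.e.\ and $h=P[f]$, in both the harmonic and the hyperbolic-harmonic case. Evaluating at the origin, where $P(0,\eta)=1/\sigma(\mathbf{S}^{n-1})$, yields the single linear constraint $\frac{1}{\sigma(\mathbf{S}^{n-1})}\int_{\mathbf{S}^{n-1}}f\,\mathrm d\sigma=h(0)=a$. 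Passing to $g:=\tfrac12(f+1)$, this becomes $0\le g\le 1$ with normalized mass $\frac{1}{\sigma(\mathbf{S}^{n-1})}\int g\,\mathrm d\sigma=c=\frac{a+1}{2}$, and by (\ref{kernel}) we have $h(x)=2P[g](x)-1$, so the problem is reduced to extremizing $\int P(x,\cdot)\,g\,\mathrm d\sigma$ over all such $g$.

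Next, fix $x\ne 0$ and, by the rotational invariance already noted after (\ref{extremal}), take $\tilde x$ to be the north pole. From $\vert x-\eta\vert^2=1-2\vert x\vert\cos\varphi+\vert x\vert^2$ the quantity $\vert x-\eta\vert$ is strictly increasing in $\varphi\in[0,\pi]$, hence each kernel $P(x,\eta)$, being a decreasing function of $\vert x-\eta\vert$ in both cases of property a), is a strictly decreasing function of $\varphi$. Its superlevel sets $\{\eta:P(x,\eta)>t\}$ are therefore precisely the polar caps $\{\varphi<\alpha\}$ about $\tilde x$, i.e.\ the sets $S(c,\tilde x)$. The bathtub principle then shows that $\int P(x,\cdot)\,g\,\mathrm d\sigma$ is maximized over $g$ of prescribed mass $c$ by $g=\chi_{S(c,\tilde x)}$ and minimized by concentrating the mass on the cap of smallest kernel values $S(c,-\tilde x)$; using Proposition \ref{MMThmSphere} to express these caps by the angle $\alpha(c)$. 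Translating back through $h=2P[g]-1$ and the definitions (\ref{extremal}) gives $m_c^n(\vert x\vert)\le h(x)\le M_c^n(\vert x\vert)$. The lower bound can equally be obtained by applying the upper bound to $-h$, since one checks $-M^n_{1-c}=m^n_c$ from the fact that the complement of $S(1-c,\tilde x)$ is $S(c,-\tilde x)$, combined with (\ref{kernel}).

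For the rigidity statement I would invoke the sharp form of the bathtub principle: the extremizer is unique away from the level set $\{\eta:P(x,\eta)=t\}$, and since $P(x,\cdot)$ is \emph{strictly} decreasing in $\varphi$, this level set is a single parallel $\{\varphi=\varphi_0\}$, an $(n-2)$-sphere of $\sigma$-measure zero. Consequently, equality $h(z)=M_c^n(\vert z\vert)$ at one point $z\ne 0$ forces $g=\chi_{S(c,\tilde z)}$ a.e., hence $f=2\chi_{S(c,\tilde z)}-1$ and $h=P[f]=2P[\chi_{S(c,\tilde z)}]-1$ on all of $\mathbf{B}^n$; the left-hand equality case is identical with $-\tilde z$ in place of $\tilde z$. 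I expect the main obstacle to be the first step, namely justifying that a bounded hyperbolic-harmonic function is genuinely the Poisson integral of an $L^\infty$ boundary function of norm $\le 1$ rather than merely dominated by one; here I would rely on the mean-value property and Fatou theorem of properties c)--d). Once $h=P[f]$ is secured, the strict monotonicity of the kernel and the rearrangement inequality are routine and apply verbatim to both kernels.
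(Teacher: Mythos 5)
First, a point of comparison: the paper you are reading does not prove this theorem at all --- it is quoted from Burgeth \cite{burgeth1992} --- so there is no in-paper proof to match against. Your core idea, namely writing $h=2P[g]-1$ with $0\le g\le 1$ of prescribed normalized mass $c$ and extremizing $\int P(x,\cdot)\,g\,\mathrm{d}\sigma$ by the bathtub principle, using that both kernels are strictly decreasing functions of the polar angle $\varphi$, is in substance exactly Burgeth's ``spherical cap'' method. That part of your argument is sound: the monotonicity of $\vert x-\eta\vert^2=1-2\vert x\vert\cos\varphi+\vert x\vert^2$ in $\varphi$, the identity $m^n_c=-M^n_{1-c}$, and the rigidity step (the level set $\{\eta: P(x,\eta)=t\}$ is an $(n-2)$-sphere of $\sigma$-measure zero, so equality at a single $z\ne 0$ forces $g=\chi_{S(c,\tilde z)}$ a.e., hence $h=2P[\chi_{S(c,\tilde z)}]-1$ on all of $\mathbf{B}^n$) are all correct.

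The genuine gap is your first step in the hyperbolic-harmonic case, which you flag but do not repair. Your route to $h=P[f]$ goes through the identity $h(rx)=P[h_r](x)$ for the dilates $h_r(\eta)=h(r\eta)$; that identity holds because $x\mapsto h(rx)$ is a solution of the same equation, continuous on the closed ball, hence equal to the Poisson integral of its boundary values. This reasoning is valid for $\Delta$ but fails for $\Delta_0$: the operator $\Delta_0=\frac{1-\vert x\vert^2}{4}\bigl(\Delta+\frac{2(n-2)}{1-\vert x\vert^2}\langle x,\nabla\rangle\bigr)$ is invariant under M\"obius automorphisms of $\mathbf{B}^n$ but not under dilations (the drift coefficient does not rescale), so $h(r\cdot)$ is in general not hyperbolic-harmonic, and weak-$*$ compactness applied to the dilates proves nothing in that case. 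What you actually need is a Herglotz/Martin-type representation theorem: every bounded hyperbolic-harmonic function on $\mathbf{B}^n$ is the hyperbolic Poisson integral of an $L^\infty$ boundary function with the same bound. This is true, but it does not follow from ``the mean-value property and the Fatou theorem'' alone; it requires either the Martin representation of positive $\Delta_0$-harmonic functions (plus Fatou's theorem to exclude a singular part of the representing measure when $h$ is bounded), or an exhaustion by sub-balls $B_r$ using the genuine $\Delta_0$-Dirichlet kernels of $B_r$ together with a proof that these kernels converge to $P(x,\cdot)$, and a maximum-principle/uniqueness argument. Once that representation is supplied, the rest of your proof applies verbatim to both kernels.
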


\begin{lem}
Let $(\nu,\mu)=(1,n/2)$ (harmonic case). Then $$\frac{\mathrm{d}M_c^n}{\mathrm{d}r}(r)\Bigr\rvert_{r=1}=\frac{2^{2-n}}{\sqrt{\pi}}\frac{\Gamma(n/2)}{\Gamma((n-1)/2)}\int_{\alpha(c)}^\pi\frac{\sin^{n-2}t}{\sin^n(t/2)}\mathrm{d}t.$$
\end{lem}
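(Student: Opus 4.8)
The plan is to avoid differentiating $(\ref{ext1})$ directly. At $r=\vert x\vert=1$ the integrand $\sin^{n-2}t/(1-2r\cos t+r^2)^{n/2}$ behaves like $\sin^{n-2}t/(2^{n}\sin^{n}(t/2))\sim t^{-2}$ near $t=0$, so $\int_0^{\alpha(c)}$ diverges there; paired with the vanishing prefactor $(1-r^2)$ this produces a $0\cdot\infty$ indeterminacy precisely at the point where the derivative is wanted. So first I would recast $M_c^n$ so that the surviving integral runs over an interval bounded away from $t=0$.

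The key device is the normalization identity from item (b), namely $(\ref{kernel})$: writing $P[\mathbf{1}]=1$ in spherical form through Proposition \ref{MMThmSphere} gives, in the harmonic case $(\nu,\mu)=(1,n/2)$,
\[
\sigma_*(n)(1-r^2)\int_0^\pi\frac{\sin^{n-2}t}{(1-2r\cos t+r^2)^{n/2}}\,\mathrm{d}t=1,\qquad r=\vert x\vert.
\]
Splitting $\int_0^\pi=\int_0^{\alpha(c)}+\int_{\alpha(c)}^\pi$ and substituting into $(\ref{ext1})$ then yields
\[
M_c^n(r)=1-2\sigma_*(n)(1-r^2)\,G(r),\qquad G(r)=\int_{\alpha(c)}^\pi\frac{\sin^{n-2}t}{(1-2r\cos t+r^2)^{n/2}}\,\mathrm{d}t .
\]
Because the lower limit is now $\alpha(c)>0$ and the denominator never vanishes on $[\alpha(c),\pi]$ for $r$ near $1$, both the integrand and its $r$-derivative are continuous there, so $G$ is of class $C^1$ in a neighborhood of $r=1$ and differentiation under the integral sign is legitimate.

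With this form the rest is immediate. The product rule gives $\frac{\mathrm{d}M_c^n}{\mathrm{d}r}(r)=-2\sigma_*(n)\big[-2r\,G(r)+(1-r^2)G'(r)\big]$, and at $r=1$ the factor $(1-r^2)$ annihilates the second term while $G'(1)$ remains finite, leaving $\frac{\mathrm{d}M_c^n}{\mathrm{d}r}(1)=4\sigma_*(n)\,G(1)$. Finally I would evaluate $G(1)$ using $1-2\cos t+1=2(1-\cos t)=4\sin^2(t/2)$, so the denominator becomes $2^{n}\sin^{n}(t/2)$ and $G(1)=2^{-n}\int_{\alpha(c)}^\pi\sin^{n-2}t/\sin^{n}(t/2)\,\mathrm{d}t$; inserting $\sigma_*(n)=\frac{1}{\sqrt\pi}\Gamma(n/2)/\Gamma((n-1)/2)$ and using $4\cdot 2^{-n}=2^{2-n}$ reproduces exactly the claimed expression. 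The only genuine obstacle is the endpoint singularity at $t=0$, which the normalization identity removes at a stroke; everything afterward is routine differentiation and trigonometric simplification.
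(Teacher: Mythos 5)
Your proof is correct and follows essentially the same route as the paper: both hinge on the normalization identity $P[\mathbf{1}]=1$ to trade the singular integral over $[0,\alpha(c)]$ for the proper one over $[\alpha(c),\pi]$, and then evaluate at $r=1$ using $2-2\cos t=4\sin^2(t/2)$. The only cosmetic difference is that the paper passes to the limit in the difference quotient $T(r)=\frac{1-M_c^n(r)}{1-r}$, whereas you differentiate the product $1-2\sigma_*(n)(1-r^2)G(r)$ directly; the substance is identical.
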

\begin{proof}
    We will use the following notation $T(r)=\frac{1-M_c^n(r)}{1-r}$. Then:
    $$\frac{\mathrm{d}M_c^n}{\mathrm{d}r}(r)\Bigr\rvert_{r=1} = \lim\limits_{r\rightarrow 1^-}T(r).$$
    By using formula (\ref{extremal}) we have
  \begin{align*}
    T(r) & = \frac{1-(2P[\chi_{S(c,\tilde{z})}](x)-1)}{1-r}=\frac{2(1-P[\chi_{S(c,\tilde{z})}](x))}{1-r}.\\
    & \mbox{If we use formula } (\ref{kernel}) \mbox{ we get }\\
    T(r) & = \frac{2P[\mathbf{1}-\chi_{S(c,\tilde{z})}](x)}{1-r}=\frac{2P[\chi_{\mathbf{S}^{n-1}\setminus S(c,\tilde{z})}](x)}{1-r}.\\
    & \mbox{ Now, by using version of Proposition \ref{MMThmSphere} we get the folloing important result: }\\
    T(r) & = 2\sigma_*(n)(1+r)\int_{\alpha(c)}^\pi\frac{\sin^{n-2}t}{(1-2r\cos t+r^2)^{n/2}}\mathrm{d}t
  \end{align*}
We can reformulate this equation, in the following manner
  $$T(r)=2\sigma_*(n)(1+r)\int_{\alpha(c)}^\pi Q(r,t)\mathrm{d}t,$$
  where $Q(r,t)=\frac{\sin^{n-2}t}{(1-2r\cos t+r^2)^{n/2}}$. Since we have limit of proper integral in the last expression, we can derive next formula
  $$\frac{\mathrm{d}M_c^n}{\mathrm{d}r}(r)\Bigr\rvert_{r=1} = 4\sigma_*(n)\int_{\alpha(c)}^\pi \frac{\sin^{n-2}t}{2^n\sin^n(t/2)}\mathrm{d}t.$$
\end{proof}

Let us define $$D_n(a)=\frac{2^{2-n}}{\sqrt{\pi}}\frac{\Gamma(n/2)}{\Gamma((n-1)/2)}\int_{\alpha(c)}^\pi\frac{\sin^{n-2}t}{\sin^n(t/2)}\mathrm{d}t.$$
Then the next theorem holds:
\begin{thm}\label{harmonic_bugheth}
Suppose that $f:\mathbf{B}^n\rightarrow\mathbf{B}^m$ is a harmonic function, such that $f(0)=a_0$, and $f$ has a continuous extension to the point $x_0\in\partial\mathbf{B}^n$ such that $f(x_0)=y_0\in\partial\mathbf{B}^m$.

Then  $\limsup\limits_{r\rightarrow 1_-}  \vert D_rf(rx_0)\vert \geq D_{n}(a)$.

If, in addition, $f$ has a differentiable continuation at point $x_0$, then there exists a positive number $\lambda\in\mathbb{R}$ such that ${Df(x_0)}^{*}y_0=\lambda x_0$ and
 $$\lambda  \geq D_{n}(a),$$
where $a=\langle a_0, y_0\rangle$. This is sharp.

\end{thm}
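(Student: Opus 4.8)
The plan is to follow the proof of Theorem~\ref{DKalHil}, replacing the hemisphere majorant $U$ by Burget's sharp majorant $M_c^n$, which is exactly the tool needed to accommodate the nonzero value $f(0)=a_0$. First I reduce to a scalar problem by setting $u(x)=\langle f(x),y_0\rangle$ on $\mathbf{B}^n$. Since $f$ is harmonic and $y_0$ is a fixed unit vector, $u$ is harmonic; by the Cauchy--Schwarz inequality $|u(x)|\le|f(x)|\,|y_0|=|f(x)|<1$, so $u$ maps $\mathbf{B}^n$ into $(-1,1)$; moreover $u(0)=\langle a_0,y_0\rangle=a\in(-1,1)$, and using the continuous extension, $u(x_0)=\langle y_0,y_0\rangle=1$.

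With $c=\frac{a+1}{2}$, Burget's theorem quoted above yields $u(x)\le M_c^n(|x|)$ for every $x\in\mathbf{B}^n$, whence $1-u(x)\ge 1-M_c^n(|x|)$ and
$$\frac{1-u(x)}{1-|x|}\ge\frac{1-M_c^n(|x|)}{1-|x|},\qquad |x|<1.$$
The decisive input is the Lemma computing $\frac{\mathrm{d}M_c^n}{\mathrm{d}r}$ at $r=1$, whose proof in fact establishes $\lim_{r\to1^-}\frac{1-M_c^n(r)}{1-r}=D_n(a)$; in particular this forces $M_c^n(1)=1$ and identifies the one-sided derivative $\frac{\mathrm{d}M_c^n}{\mathrm{d}r}(1)=D_n(a)$.

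For the first, non-differentiable assertion I restrict to the radial segment $u_0(r):=u(rx_0)$, which is smooth on $[0,1)$, continuous up to $1$, and satisfies $u_0(1)=1$. The mean value theorem on $[r,1]$ produces $c_r\in(r,1)$ with $u_0'(c_r)=\frac{1-u_0(r)}{1-r}\ge\frac{1-M_c^n(r)}{1-r}$. Letting $r\to1^-$, one has $c_r\to1^-$ while the right-hand side tends to $D_n(a)$, so $\limsup_{r\to1^-}D_r u(rx_0)\ge D_n(a)$; since $D_r u(rx_0)=\langle D_r f(rx_0),y_0\rangle\le|D_r f(rx_0)|$ by Cauchy--Schwarz, the claimed bound $\limsup_{r\to1^-}|D_r f(rx_0)|\ge D_n(a)$ follows. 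I emphasize that this routing avoids any monotonicity property of $(M_c^n)'$, which the excerpt does not supply, so it is cleaner than requiring the analogue of Lemma~\ref{Kalaj2016}.

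If $f$ is differentiable at $x_0$, then $u_0$ is differentiable at $1$ and, passing to the limit in the displayed inequality, $D_r u(x_0)=\lim_{r\to1^-}\frac{1-u(rx_0)}{1-r}\ge D_n(a)$. For the adjoint relation I invoke Claim~\ref{531-1}: since $\mathbf{B}^n\subset H_{x_0}$, $f(\mathbf{B}^n)\subset\mathbf{B}^m\subset H_{y_0}$ and $f(x_0)=y_0$, parts (i)--(ii) give $Df(x_0)^{*}y_0=\lambda x_0$, while part (iv) with $n_a=x_0$, $n_b=y_0$ identifies $\lambda=D_r u(x_0)$; hence $\lambda=D_r u(x_0)\ge D_n(a)>0$, which is the asserted positivity and lower bound. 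Sharpness is exhibited in the scalar case $m=1$ by taking $f=M_c^n$, the extremal of Burget's theorem (harmonic, $(-1,1)$-valued, with $f(0)=a$), for which every inequality above becomes an equality. The step needing the most care is the legitimacy of Claim~\ref{531-1} here, namely that the boundary maximum of $u$ at $x_0$ forces the gradient $\nabla u(x_0)=Df(x_0)^{*}y_0$ to be normal to $\partial\mathbf{B}^n$, together with the bookkeeping that $a=\langle a_0,y_0\rangle\in(-1,1)$ so that $\alpha(c)>0$ and the integral defining $D_n(a)$ converges; both are routine but must be recorded explicitly.
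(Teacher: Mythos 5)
Your proof is correct and follows essentially the same route as the paper's: reduce to the scalar harmonic function $u=\langle f,y_0\rangle$, majorize it by Burget's extremal $M_c^n$ with $c=\frac{a+1}{2}$, apply the mean value theorem along the radius, pass to the limit using the lemma identifying $\lim_{r\to 1^-}\frac{1-M_c^n(r)}{1-r}=D_n(a)$, and finish with the Cauchy--Schwarz inequality. You in fact go slightly beyond the paper's written proof, which stops after the $\limsup$ assertion: your handling of the differentiable case via Claim~\ref{531-1} and of sharpness via the polar-cap extremal is exactly how the paper treats the analogous steps in Theorem~\ref{DKalHil}, so it completes the argument in the intended way.
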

\begin{proof}
  Let us define function $h(x)=\langle f(x),y_0\rangle$. This function is harmonic (respectively hyperbolic harmonic) in $\mathbf{B}^n$, with $h(0)=a$, and $h(x_0)=1$. Since, by the Theorem of Fotou $M_c^n(1)=1$ we have an implication $$\frac{h(x_0)-h(rx_0)}{1-r}\geq\frac{1-M_c^n(r)}{1-r}.$$

  If $u(r)=h(rx_0),r\in [0,1)$ then $u'(r)=Dh(rx_0)x_0=D_rh(rx_0)$. From Lagrange's theorem we have that for every $r\in[0,1)$ there exists $r_0\in(r,1)$ such that
  $$\frac{1-u(r)}{1-r}=u'(r_0)=D_rh(r_0x_0)\geq\frac{1-M_c^n(r)}{1-r}.$$
  This means that $\limsup\limits_{r\rightarrow 1^-}D_rh(rx_0)\geq\liminf\limits_{r\rightarrow 1_-}\frac{1-u(r)}{1-r}\geq D_{n}(a)$.
  Cauchy -Schwarz inequality provides us that $\vert D_rf(x)\vert \geq D_rh(x)$, which gives us
   $$\limsup\limits_{r\rightarrow 1^-}  \vert D_rf(rx_0)\vert \geq D_{n}(a).$$
\end{proof}

At the end of this section we will investigate whether or not we can formulate the similar version of Schwarz lemma on the boundary, for hyperbolic harmonic functions.

\begin{lem}
Let $(\nu,\mu)=(n-1,n-1)$, where $n>2$ (hyperbolic-harmonic case). Then
$$\frac{\mathrm{d}M_c^n}{\mathrm{d}r}\left(r\right)\Bigr\rvert_{r=1}=0.$$
\end{lem}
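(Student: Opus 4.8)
The plan is to mirror the computation of the preceding harmonic-case lemma, the only new input being the larger exponent $\nu=n-1$. Set $T(r)=\frac{1-M_c^n(r)}{1-r}$. Since constant functions are hyperbolic harmonic, property (b) together with the Fatou theorem (property (d)) forces $M_c^n(1)=1$ as $x=r\tilde x$ approaches the center of the cap, so the one-sided derivative we want is exactly
$$\frac{\mathrm{d}M_c^n}{\mathrm{d}r}(r)\Bigr\rvert_{r=1}=\lim_{r\to1^-}T(r).$$

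First I would rewrite $1-M_c^n(r)$ through the complementary cap. From (\ref{extremal}) and (\ref{kernel}),
$$1-M_c^n(r)=2\bigl(1-P[\chi_{S(c,\tilde x)}](x)\bigr)=2P[\chi_{\mathbf{S}^{n-1}\setminus S(c,\tilde x)}](x),$$
and applying Proposition \ref{MMThmSphere} in the hyperbolic-harmonic case $(\nu,\mu)=(n-1,n-1)$ yields
$$1-M_c^n(r)=2\sigma_*(n)(1-r^2)^{n-1}\int_{\alpha(c)}^{\pi}\frac{\sin^{n-2}t}{(1-2r\cos t+r^2)^{n-1}}\,\mathrm{d}t.$$

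Next I would factor $(1-r^2)^{n-1}=(1-r)^{n-1}(1+r)^{n-1}$ and divide by $1-r$, so that
$$T(r)=2\sigma_*(n)\,(1-r)^{n-2}(1+r)^{n-1}\int_{\alpha(c)}^{\pi}\frac{\sin^{n-2}t}{(1-2r\cos t+r^2)^{n-1}}\,\mathrm{d}t.$$
The decisive point is that the remaining integral stays bounded as $r\to1^-$. Because $c=\frac{a+1}{2}\in(0,1)$ we have $\alpha(c)\in(0,\pi)$, so on $[\alpha(c),\pi]$ the denominator $1-2r\cos t+r^2$ is bounded below by a positive constant uniformly for $r$ near $1$ (its limit is $4\sin^2(t/2)\geq 4\sin^2(\alpha(c)/2)>0$); hence by dominated convergence the integral tends to the finite limit $\int_{\alpha(c)}^{\pi}\frac{\sin^{n-2}t}{(4\sin^2(t/2))^{n-1}}\,\mathrm{d}t$. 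Since $n>2$ forces $(1-r)^{n-2}\to0$ while $(1+r)^{n-1}\to2^{n-1}$, the product tends to $0$, giving the claimed value.

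The only genuine obstacle is justifying the passage of the limit inside the integral, and this is precisely where the hypotheses $n>2$ and $\alpha(c)>0$ enter. This is exactly the contrast with the harmonic case $\nu=1$, where the prefactor $(1-r)^{\nu-1}=(1-r)^{0}=1$ does not vanish; here the extra power $(1-r)^{n-2}$ annihilates the (finite) integral, so the boundary derivative degenerates to zero and, as announced, no Schwarz-type boundary estimate can be obtained for hyperbolic harmonic functions by this method.
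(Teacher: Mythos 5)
Your proposal is correct and follows essentially the same route as the paper: both write $T(r)=\frac{1-M_c^n(r)}{1-r}$, use the complementary-cap representation and Proposition \ref{MMThmSphere} to obtain the factor $(1-r)^{n-2}(1+r)^{n-1}$ in front of the integral $\int_{\alpha(c)}^{\pi}\frac{\sin^{n-2}t}{(1-2r\cos t+r^2)^{n-1}}\,\mathrm{d}t$, pass the limit under the integral sign to see that it converges to the finite quantity $\int_{\alpha(c)}^{\pi}4^{-(n-1)}\sin^{n-2}t\,\sin^{-2(n-1)}(t/2)\,\mathrm{d}t$, and conclude from $n>2$ that $T(r)\to 0$. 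Your dominated-convergence justification (denominator bounded below by $4\sin^{2}(\alpha(c)/2)>0$ near $r=1$) simply spells out the step the paper asserts as passing the limit under a proper integral, so the two arguments coincide.
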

\begin{proof}
  Like in previous lemma, we have $$\frac{\mathrm{d}M_c^n}{\mathrm{d}r}(r)\Bigr\rvert_{r=1} = \lim\limits_{r\rightarrow 1^-}T(r),$$

  Let us define $Q_{hyp}(r,t)=\frac{\sin^{n-2}t}{(1-2r\cos t+r^2)^{n-1}}$. Then $$T(r)=2\sigma_*(n)(1-r)^{n-2}(1+r)^{n-1}\int_{\alpha(c)}^\pi Q_{hyp}(r,t)\mathrm{d}t.$$
  Also, define $J_{hyp}(r)=\int_{\alpha(c)}^\pi Q_{hyp}(r,t)\mathrm{d}t$.
  We can pass with the limit, under proper integral sign, to get
$$\lim\limits_{r\rightarrow 1^-}J_{hyp}(r)=J_{hyp}=\int_{\alpha(c)}^\pi q_{hyp}(t) dt,$$   where
$q_{hyp}(t)=4^{-n+1}\sin^{n-2}t \sin^{-2(n-1)}t/2 $.\\
From this we can draw a conclusion $T(r)\sim d_n(1-r)^{n-2}, r\rightarrow 1^-$. This immediately gives our assertion.
\end{proof}
By this Lemma we conclude that we have different situation concerning hyperbolic-harmonic function, mapping unit ball in $\RR^n$ into unit disc in $\RR^m$, in comparison with the harmonic function in same settings. Namely, we found explicit hyperbolic-harmonic function, that maps unit ball into interval $(-1,1)$ such that $u(x_0)=1$, for some $x_0$ on the boundary of the unit ball, but radial derivative in the point $x_0$ is vanishing.

At the first glance, this may look as surprise, having in mind famous Hopf lemma. Function $u$ is satisfying $L(u)=0$, where $L$ is uniformly elliptical partial diferential operator of second order, it has global maximum at point $x_0$ on the boundary of unit ball, so we expected that normal derivative in the point $x_0$ must be grater than zero.

Let $\Omega$ be a domain in $\mathbb{R}^n,n\geq 2$, $x\in\Omega$ be a point and $u$ belongs to $C^2(\Omega)$. We define
$$Lu=a^{ij}(x) D_{ij}u+b^i(x) D_iu+c(x)u, a^{ij}=a^{ji}.$$

The summation convention that repeated indices indicate summation from 1 to n is followed here. We adopt the following definitions: operator $L$ is \textit{elliptic} in point $x\in\Omega$ if the coefficient matrix $A(x)=[a^{ij}(x)]$ is positive definite. If $\Lambda(x),\lambda(x)$ are the greatest and the smallest eigenvalue of matrix $A(x)$ and $\Lambda/\lambda$ is bounded in $\Omega$ we say that $L$ is \textit{uniformly elliptic} in $\Omega$. Also we will need next condition. Let $k>0$ is a constant and $x\in\Omega$ be an arbitrary
\begin{equation}\label{BCondition}
  \frac{\vert b^i(x)\vert }{\lambda(x)}\leq k, i=1,\ldots,n.
\end{equation}
Now, we can formulate Hopf Lemma

\begin{lem}[Hopf lemma,\cite{gilbarg_trudinger}, Lemma 3.4]
  Suppose that $L$ is uniformly elliptic operator, that satisfies condition (\ref{BCondition}), $c=0$ and $Lu\geq 0$ in $\Omega$. Let $x_0\in\partial\Omega$ be such that
  \begin{itemize}
    \item[(i)] $u$ is continuous at $x_0$;
    \item[(ii)] $u(x_0)>u(x)$ for all $x\in\Omega$;
    \item[(iii)] $\partial\Omega$ satisfies an interior sphere condition at $x_0$.
  \end{itemize}
  Then the outer normal derivative of $u$ at $x_0$, if it exists, satisfies the strict inequality
  \begin{equation*}
    \frac{\partial}{\partial\nu}u(x_0)>0.
  \end{equation*}
\end{lem}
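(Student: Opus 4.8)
The plan is to run the classical barrier argument of E.~Hopf for the operator $L$. First I would use the interior sphere condition (iii) to choose a ball $B_R(y)\subset\Omega$ with $x_0\in\partial B_R(y)$, so that $R=|x_0-y|$, and I would work inside the annulus $A=B_R(y)\setminus\overline{B_{R/2}(y)}$, which is relatively compact in $\Omega$. The key device is the radial barrier
\[
v(x)=e^{-\alpha r^2}-e^{-\alpha R^2},\qquad r=|x-y|,
\]
with $\alpha>0$ a parameter to be fixed large; note that $v>0$ on the inner sphere, $v=0$ on the outer sphere $\{r=R\}$, and $v$ decreases in $r$.

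The decisive computation is to check $Lv>0$ on $A$ for $\alpha$ large. Differentiating gives $D_iv=-2\alpha(x_i-y_i)e^{-\alpha r^2}$ and $D_{ij}v=\bigl(4\alpha^2(x_i-y_i)(x_j-y_j)-2\alpha\delta_{ij}\bigr)e^{-\alpha r^2}$, whence
\[
Lv=e^{-\alpha r^2}\Bigl(4\alpha^2\,a^{ij}(x_i-y_i)(x_j-y_j)-2\alpha\,a^{ii}-2\alpha\,b^i(x_i-y_i)\Bigr).
\]
On $A$ ellipticity yields $a^{ij}(x_i-y_i)(x_j-y_j)\ge\lambda(x)|x-y|^2\ge\lambda(x)(R/2)^2$, so the $\alpha^2$-term dominates the two $\alpha$-terms provided $\alpha$ is chosen large enough; the bound (\ref{BCondition}) on $|b^i|/\lambda$ together with uniform ellipticity is exactly what makes this choice possible uniformly on $A$. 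Hence $Lv>0$ there.

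Next I would compare $u$ with the barrier. Since the inner sphere $\{r=R/2\}$ is a compact subset of $\Omega$, hypothesis (ii) gives $\delta:=\min_{\{r=R/2\}}\bigl(u(x_0)-u\bigr)>0$, and I would pick $\varepsilon>0$ so small that $\varepsilon v\le\delta$ on that sphere. Put $w=u-u(x_0)+\varepsilon v$. Because $c=0$ annihilates the constant $u(x_0)$, we have $Lw=Lu+\varepsilon Lv>0$ in $A$, while on $\partial A$ one checks $w\le0$: on the inner sphere by the choice of $\varepsilon$, and on the outer sphere because $v=0$ and $u\le u(x_0)$ there by (ii), with $w(x_0)=0$. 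The weak maximum principle for operators with $c=0$ then forces $w\le0$ on $\overline A$, so $w$ attains its maximum over $\overline A$ at the boundary point $x_0$.

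Finally I would read off the sign of the normal derivative. Approaching $x_0$ from inside $A$ along $-\nu$, the inequality $w\le0=w(x_0)$ gives $\partial w/\partial\nu(x_0)\ge0$, that is $\partial u/\partial\nu(x_0)\ge-\varepsilon\,\partial v/\partial\nu(x_0)$. Since $\nu$ points outward (increasing $r$) and $\partial v/\partial r|_{r=R}=-2\alpha R e^{-\alpha R^2}<0$, the right-hand side equals $2\alpha R\varepsilon e^{-\alpha R^2}>0$, giving the strict inequality $\partial u/\partial\nu(x_0)>0$. The only real obstacle is the barrier estimate $Lv>0$, where one must ensure the positive quadratic $\alpha^2$-term beats the lower-order $\alpha$-terms uniformly on $A$; once that is secured the maximum-principle and normal-derivative steps are routine.
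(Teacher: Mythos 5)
Your proof is correct: it is the classical barrier argument, and in fact it is precisely the proof of Lemma 3.4 in Gilbarg--Trudinger, which is the source this paper cites — the paper itself states the lemma without proof. The one point worth making explicit is that on the portion of the outer sphere $\{r=R\}$ that may lie on $\partial\Omega$ (where $u$ need not be defined), one invokes the weak maximum principle in its $\limsup$ form, using that $u\le u(x_0)$ throughout $\Omega$ and $v=0$ on $\{r=R\}$; your phrasing already contains the needed ingredients, so this is a presentational remark rather than a gap.
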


What turns out to be is that Hopf lemma demands some conditions on the coefficients standing by the first order derivatives of the elliptic partial differential operator L, that hyperbolic-harmonic functions does not satisfies. We have that hyperbolic-harmonic functions satisfies $L u=\Delta_0 u=0$, where $A(x)=Id$ and $b_i(x)=\frac{2(n-2)}{1-\vert  x\vert  ^2},i=1,\ldots,n$. Since $\Lambda(x)=\lambda(x)=1, x\in\mathbf{B}^n$, we conclude that operator $\Delta_0$ does not satisfies condition (\ref{BCondition}) in $\mathbf{B}^n$, so we can not apply Hopf lemma in this situation.

A part of our result can be interpreted as a confirmation that condition (\ref{BCondition}) can not be excluded from the statement of Hopf lemma.



\section{Appendix}\label{s_App}

Motivated by the role of the Schwarz lemma in Complex Analysis and
numerous fundamental results, see for instance \cite{MM-JMAA,Osserman} and
references therein, in 2016, the first author \cite{RGschMM1} has posted
on ResearchGate the project “Schwarz lemma, the Carathéodory and
Kobayashi Metrics and Applications in Complex
Analysis”.\footnote{Various discussions regarding the subject can also
be found in the Q\&A section on ResearchGate under the question “What
are the most recent versions of the Schwarz lemma?” \cite{RGschMM2}.}

In this project and in \cite{MM-JMAA}, cf. also  \cite{kavu}, we developed
the method related to holomorphic mappings with strip codomain (we refer
to this method as the approach via the Schwarz–Pick lemma for
holomorphic maps from the unit disc into a strip; shortly ”planar strip method”). It is worth mentioning
that the Schwarz lemma has been generalized in various directions; see
\cite{RGschMM1,CHPRbanach}  and the references therein.

Even in planar case researches had some difficulties in handling Schwarz
lemma for harmonic maps of the unit disc into self which does not fix the
origin.
It seems that the researchers have overlooked Burgeth  and H.~W.~Hethcote
results  and they have had  some difficulties to handle the case
$f(0)\neq 0$ in this context; see  for more  details
\cite{MMKhal,MaSvAADM,BiltenAcad}.

In joint paper of the first author with  M. Svetlik  \cite {MaSvAADM}
using   ''planar strip method''  which is  a completely different
approach than B. Burgeth\cite{burgeth1992}, we get a simple proof of an
optimal version of the Schwarz lemma for real valued harmonic functions
(without the assumption that $0$ is mapped to $0$ by the corresponding
map), 
which improves H.~W.~Hethcote result.\footnote{Note here that Burget's
spherical cap method  yield optimal estimate in both planar and spatial
case}.

In joint paper of the first author with  A. Khalfallah and   M. Mhamdi
\cite{KaMaMedJ}, some properties of mappings admitting a Poisson-type
integral representations and the boundary Schwarz lemma were considered.

Presently on this project  the first author  works  with some of his
associates:
A. Khalfallah, M. Arsenovi\'c,  M.Svetlik,  M. Mhamdi, B. Purti\'c, H.P.
Li, J. Gaji\'c  and the second author of this paper.

Chinese mathematicians have made a great contribution to this field but
here we will mention only some whose results are
related to our results. For some interesting complex $n$-dimensional generalisations of classical Schwarz lemma type results see Jian-Feng Zhua's articles \cite{JZhu-RIM} and \cite{JZhu2016}. In paper \cite{TangLiuZhang} the authors proved Schwarz lemma
on the boundary for holomorphic mappings between
unit balls in $\mathbb{C}^n$, and some of theirs rigidity properties.
Generalization of this theorem, for separable complex Hilbert space was
given by Z. Chen, Y. Liu and Y. Pan in \cite{ChenLiuPan}. While proving Proposition 2.9,
we independently proved Lemma 2.7, but later found
that result proven in \cite{ChenLiuPan}, as it can be seen in corresponding reference.
In \cite{DaiChenPan} the authors proved a higher order Schwarz-Pick lemma for holomorphic
mappings between unit balls in complex Hilbert
spaces.

For generalizations  of  Schwarz lemmas for  planar harmonic mappings into
the
sharp forms of Banach spaces we refer  the  interested reader to  Chen,
Hamada et al. \cite{CHfunct,CHPRbanach} and literature cited there for the
background.
Recall  the main purpose of the paper \cite{CHPRbanach} is to develop some
methods to investigate
the Schwarz type lemmas for holomorphic mappings and pluriharmonic
mappings in Banach spaces. Initially, they  extend the classical Schwarz
lemmas
for holomorphic mappings to Banach spaces.
Furthermore, they improve and
generalize the classical Schwarz lemmas for planar harmonic mappings and
obtain sharp versions for Banach spaces, and present some applications to sharp boundary
Schwarz type lemmas for pluriharmonic mappings in Banach spaces. The
obtained
results provide improvements and generalizations of the corresponding
results
in \cite{CHfunct} (cf. also\cite{Li_MM}).
\vskip.2cm

\textbf{Acknowledgments} The authors are indebted to M. Arsenovi\'c for an interesting discussions on this paper.

\nocite{*}
\bibliography{sn-bibliography}


\end{document}